\documentclass[11pt,letter]{article} %
\usepackage[papersize={8.5in,11in},margin=1in]{geometry}
\usepackage[format=hang,font={small}]{caption}

\usepackage{microtype}
\usepackage{graphicx}
\usepackage{subfigure}
\usepackage{booktabs} %

\usepackage{multicol}

\usepackage[numbers,sort,compress]{natbib}

\usepackage[noblocks]{authblk}

\usepackage{times}
\usepackage{microtype}
\usepackage{graphicx}
\usepackage{subfigure}
\usepackage[pagebackref,linktocpage]{hyperref}

\hypersetup{
    colorlinks=true,%
    citecolor=blue,%
    filecolor=blue,%
    linkcolor=blue,%
    urlcolor=blue
}

\usepackage{color}
\usepackage{url}            %
\usepackage{amsfonts}       %
\usepackage{nicefrac}       %
\usepackage{microtype}      %

\usepackage{multirow}

\usepackage{times}
\usepackage{algorithm}
\usepackage{xspace}

\usepackage{amsfonts}
\usepackage{graphicx}
\usepackage{amsmath}
\usepackage{amssymb}
\usepackage{bbding}
\usepackage{pifont}
\usepackage{wasysym}
\usepackage{bm}
\usepackage{algorithmic}
\usepackage{epstopdf}
\usepackage{psfrag}
\usepackage{bbm}
\usepackage{enumerate}
\usepackage{subeqnarray}
\usepackage{cases}

\newcommand{\bbR}{\mathbb{R}}

\newcommand{\1}{{\mathbbm{1}}}

\renewcommand{\b}{{\mathbf{b}}}

\renewcommand{\v}{{\mathbf{v}}}

\usepackage{color}

\usepackage{soul}

\newcommand{\beq}{\begin{equation}}
\newcommand{\eeq}{\end{equation}}
\newcommand{\beqa}{\begin{eqnarray}}
\newcommand{\eeqa}{\end{eqnarray}}
\newcommand{\beqas}{\begin{eqnarray*}}
\newcommand{\eeqas}{\end{eqnarray*}}
\newcommand{\bi}{\begin{itemize}}
\newcommand{\ei}{\end{itemize}}
\newcommand{\ba}{\begin{array}}
\newcommand{\ea}{\end{array}}

\setcounter{page}{1}

\newtheorem{theorem}{Theorem}
\newtheorem{lemma}{Lemma}
\newtheorem{corollary}{Corollary}
\newtheorem{remark}{Remark}

\newtheorem{assumption}{Assumption}


\usepackage{amsmath,amsfonts,bm}

















\def\1{\bm{1}}







\def\vzero{{\bm{0}}}

\def\vb{{\bm{b}}}
\def\vc{{\bm{c}}}

\def\vs{{\bm{s}}}

\def\vu{{\bm{u}}}
\def\vv{{\bm{v}}}
\def\vw{{\bm{w}}}
\def\vx{{\bm{x}}}
\def\vy{{\bm{y}}}
\def\vz{{\bm{z}}}


\def\mA{{\bm{A}}}
\def\mB{{\bm{B}}}

\def\mI{{\bm{I}}}

\def\mQ{{\bm{Q}}}

\DeclareMathAlphabet{\mathsfit}{\encodingdefault}{\sfdefault}{m}{sl}
\SetMathAlphabet{\mathsfit}{bold}{\encodingdefault}{\sfdefault}{bx}{n}


\def\gV{{\mathcal{V}}}
\def\gW{{\mathcal{W}}}
\def\gX{{\mathcal{X}}}












\DeclareMathOperator*{\argmin}{arg\,min}

\author[*]{Chaobing Song}
\author[*]{Yong Jiang}
\author[$\dagger$]{Yi Ma}
\affil[*]{Tsinghua-Berkeley Shenzhen Institute, Tsinghua University}
\affil[$\dagger$]{EECS Department, University of California, Berkeley}

\begin{document}
\title{Breaking the $O(1/\epsilon)$  Optimal Rate for a Class of Minimax Problems\footnote{\textbf{Funding: } This work was done during Chaobing Song's visit to Professor Yi Ma's group at UC Berkeley. The work is partially supported by the TBSI program and EECS Startup fund of Professor Yi Ma.}}
\maketitle

\begin{abstract}
It is known that for convex optimization $\min_{\vw\in\gW}f(\vw)$, the best possible rate of first order accelerated methods is $O(1/\sqrt{\epsilon})$.
However, for the bilinear minimax problem: $\min_{\vw\in\gW}\max_{\vv\in\gV}f(\vw)+\langle\vw, \mA\vv\rangle-h(\vv)$ where both $f(\vw)$ and $h(\vv)$ are convex, the best known rate  of first order methods slows down to $O(1/{\epsilon})$. It is not known whether one can achieve the accelerated  rate $O(1/\sqrt{\epsilon})$ for the bilinear minimax problem without assuming $f(\vw)$ and $h(\vv)$ being strongly convex. In this paper, we fill this theoretical gap by proposing a bilinear accelerated extragradient (BAXG) method. We show that when $\gW=\bbR^d$, $f(\vw)$ and $h(\vv)$ are convex and smooth, and $\mA$ has full column rank, then the BAXG method achieves an accelerated rate $O(1/\sqrt{\epsilon}\log \frac{1}{\epsilon})$, within a logarithmic factor to the likely optimal rate $O(1/\sqrt{\epsilon})$. As result, a large class of bilinear convex concave minimax problems, including a few problems of practical importance, can be solved much faster than previously known methods.
\end{abstract}

\section{Introduction}

\subsection{Background}
Nesterov's acceleration is a core technique to improve the convergence behavior of first order methods for a convex optimization problem: 
\vspace{-2mm}
\begin{equation}
\min_{\vw\in\gW} f(\vw),\label{eq:smo-con}
\vspace{-1mm}
\end{equation}
where $\gW$ is a simple closed convex set and admits an efficient projection operator, and $f(\vw)$ is convex and smooth. Let $\vw^*$ be an optimal solution of \eqref{eq:smo-con}. Then to find an $\epsilon$-accurate solution $\vw\in\gW$ such that $f(\vw)-f(\vw^*)\le \epsilon,$ gradient descent (GD) methods need at least $O(1/\epsilon)$ iterations. However, by combining a momentum step, Nesterov's accelerated gradient descent (AGD) can improve the $O(1/\epsilon)$ rate to $O(1/\sqrt{\epsilon})$, which is optimal and can not be improved further by first order methods that only access gradient information $\nabla f(\vw)$, $\forall \vw\in\gW$.       

In practice, the optimization often is subject to certain (linear or affine) constraints and the associated Lagrangian formulation typically leads to a (bilinear) minimax problem. So in general, we may need to consider the following bilinear convex concave minimax problem:
\vspace{-1mm}
\begin{align}
\min_{\vw\in\gW}\max_{\vv\in\gV}f(\vw) +\langle \vw, \mA\vv\rangle - h(\vv),\label{eq:con-con}
\vspace{-1mm}
\end{align}
where $\gW\subset\bbR^d, \gV\subset\bbR^n$ are simple closed convex sets and admit efficient projection operators respectively, $f(\vw)$ and $h(\vv)$ are convex and smooth. To attain an $\epsilon$-accurate solution $(\vw, \vv)$ in terms of a proper  merit function (See \eqref{eq:appro-weak}), without using the Nesterov's acceleration, the extragradient method \cite{nemirovski2004prox} and the primal dual method \cite{chambolle2011first} need at most $O(1/\epsilon)$ iterations. According to  \cite{nemirovsky1983problem}, \cite{nemirovski2004prox}, for general bilinear convex concave minimax problems, the $O(1/\epsilon)$ rate is the optimal convergence rate of first-order methods. 

In the more recent work of \cite{ouyang2018lower}, to explicitly establish the $O(1/\epsilon)$ rate as the lower bound of the problem \eqref{eq:con-con}, the authors have constructed a particular instance of \eqref{eq:con-con} as follows 
\vspace{-1mm}
\begin{align}
\min_{\vw\in\bbR^d} f(\vw) = \frac{1}{2}\vw^T \mB\vw - \vc^T\vw, \; 
\mbox{s.t.}\; \mA^T\vw = \vb, \label{eq:con-con-exam}
\vspace{-1mm}
\end{align}
where $\vc\in\bbR^d, \mB\in\bbR^{d\times d}$ is a symmetric positive semidefinite matrix,  $\mA\in\bbR^{d\times n}$ is a matrix with full column rank and $\vb\in\bbR^n$. %
The problem \eqref{eq:con-con-exam} is equivalent to its Lagrangian formulation
\vspace{-1mm}
\begin{align}
\min_{\vw\in\bbR^d}\max_{\vv\in\bbR^n}\frac{1}{2}\vw^T \mB\vw - \vc^T \vw +\langle \vw, \mA\vv\rangle - \vb^T\vv, \label{eq:lag}
\vspace{-1mm}
\end{align}
which is a particular instance of \eqref{eq:con-con}.  
In \cite{ouyang2018lower}, first order methods for \eqref{eq:con-con-exam} are meant by methods that access the first order oracle that returns
\vspace{-1mm}
\begin{align}
(\nabla f(\vw), \mA^T\vw, \mA\vv), \forall \vw\in\bbR^d,\vv\in\bbR^n.\label{eq:oracle}
\vspace{-1mm}
\end{align}
Let $\vw^*$ be a solution of \eqref{eq:con-con-exam}. \cite{ouyang2018lower} has shown that to attain an $\epsilon$-accurate solution such that $|f(\vw)-f(\vw^*)|\le \epsilon, \|\mA^T\vw-\vb\|\le \epsilon$, at least $O(1/\epsilon)$ iterations are needed if we can only access the first order oracle specified in \eqref{eq:oracle-2}. Therefore, \cite{ouyang2018lower} concludes that first-order methods on affinely constrained problems generally cannot be accelerated from the known convergence rate $O(1/\epsilon)$ to $O(1/\sqrt{\epsilon})$. 

Despite this result, in this paper, we show that somewhat surprisingly, for the problem in \eqref{eq:con-con-exam}, if we are allowed to access the following extended first order oracle that returns 
\begin{align}
(\nabla f(\vw), \mA^T\vw, \mA\vv, \mA^T\mA\vv), \forall \vw\in\bbR^d,\vv\in\bbR^n,\label{eq:extend-oracle}
\end{align}
then {\em the lower bound $O(1/{\epsilon})$ can be broke down to $O(1/\sqrt{\epsilon}\log \frac{1}{\epsilon})$}, since $\mA^T\mA$ is a positive definite matrix by the assumption of $\mA$ being full column rank. Unlike \cite{ouyang2018lower}, we do not simply establish a tighter lower bound with the extended first order oracle  \eqref{eq:extend-oracle}. Instead, we explicitly construct a bilinear accelerated extragradient (BAXG) method and characterize its complexity for solving the class of bilinear convex concave problems:
\vspace{-1mm}
\begin{align}
\min_{\vw\in\bbR^d}\max_{\vv\in\gV}f(\vw) +\langle \vw, \mA\vv\rangle - h(\vv),\label{eq:con-con-prob}
\vspace{-2mm}
\end{align}
where $\gV\subset\bbR^n$ is a closed convex set, both $f(\vw)$ and $h(\vv)$ are  convex and smooth,
and $\mA\in\bbR^{d\times n}$ is a matrix with full column rank. Clearly, by \eqref{eq:lag}, \eqref{eq:con-con-exam} is a special case of \eqref{eq:con-con-prob}.
Many well-known problems in machine learning can be reduced to \eqref{eq:con-con-prob} (see discussion in Section \ref{eq:moti-exam}). 

If $\gV\subset\bbR^n$ is a general convex set instead of the simple $\bbR^n$, we typically assume the first order oracle can access the projection onto this set.\footnote{In other words, projection onto $\gV\subset\bbR^n$ can be computed with cost on par with computing the gradient.}  For \eqref{eq:con-con-prob}, correspondingly, the first order oracle \eqref{eq:oracle} becomes as 
\begin{align}
\!\!\!\!(\nabla f(\vw), \nabla g(\vv), \text{Prj}(\vv),  \mA^T\vw, & \mA\vv), 
\forall \vw\in\bbR^d,\vv\in\gV, \label{eq:oracle-2}
\vspace{-1mm}
\end{align}
where $\text{Prj}(\vv)$ denotes the Euclidean projection operator 
\begin{align}
\text{Prj}(\vv) = \argmin_{\hat{\vv}\in\gV}\|\hat{\vv} - \vv\|^2,\nonumber
\end{align}
and the extended first order oracle \eqref{eq:extend-oracle} becomes 
\begin{align}
(\nabla f(\vw), \nabla g(\vv), \text{Prj}(\vv), \mA^T\vw, & \mA\vv, \mA^T\mA\vv),
\forall \vw\in\bbR^d,\vv\in\gV. \label{eq:extend-oracle-2}
\end{align}

\subsection{Formulation}
We first reformulate the problem \eqref{eq:con-con-prob} under the variational inequality framework \cite{facchinei2007finite}. To simplify notation, we define 
$g(\vx) \doteq f(\vw) + h(\vv)$ and denote
\begin{equation}
\gX\doteq\bbR^d\times \gV,\quad
\vx \doteq
\left[
\begin{matrix}
     \vw  \\
     \vv
\end{matrix}
\right],\quad
\mQ \doteq\left[\begin{matrix}
   \boldsymbol 0  &  \mA \\
-\mA^T &  \boldsymbol 0
\end{matrix}\right], \label{eq:H}
\end{equation}
where $g(\vx)$ is the potential function and $\mQ$ is the operator about the interacting term between $\vw$ and $\vv.$ By the optimality condition of \eqref{eq:con-con-prob}, solving \eqref{eq:con-con-prob} is equivalent to finding a solution $\vx^*$ for a mixed variational inequality problem, denoted as MVIP$(\nabla g(\vx) + \mQ, \gX)$, such that $\forall \vy\in\gX,$
\begin{align}
g(\vy) - g(\vx^*) + \langle \mQ\vx^* , \vy - \vx^*\rangle \ge 0. \label{eq:strong}
\end{align}

As shown in \cite{he20121}, in the convex concave setting, the condition \eqref{eq:strong} is equivalent to $\forall \vy\in\gX,$ 
\begin{align}
g(\vy) - g(\vx^*) + \langle \mQ\vy, \vy - \vx^*\rangle \ge 0.   \label{eq:weak}
\end{align}
As a result, similar to \cite{monteiro2011complexity,he20121,gu2014customized,chen2017accelerated}, in this paper, we aim to find an $\epsilon$-accurate solution $\tilde{\vx}$ of the MVIP$(\nabla g(\vx) + \mQ, \gX)$ problem such that $\forall \vy\in\gX,$   
\begin{align}
 g(\tilde{\vx}) - g(\vy)  + \langle \mQ\vy,  \tilde{\vx} - \vy\rangle \le \epsilon.   \label{eq:appro-weak}
\end{align}
If the problem \eqref{eq:con-con-prob} has additional structures, then better problem-specific merit functions can be used for algorithm design. For instance, for the particular instance \eqref{eq:con-con-exam}, one can use  $|f(\vw)-f(\vw^*)|\le \epsilon, \|\mA^T\vw-\vb\|\le \epsilon$ as merit function. In this paper,  we mainly focus on establishing the substantial improvement of convergence rate hence, to simplify analysis, will use \eqref{eq:appro-weak} to measure the progress of our algorithm.  

Our analysis will be based on the following assumptions.%
\begin{assumption}\label{ass:g}
Given the closed convex set $\gX = \bbR^d\times \gV,$
$g(\vx)$ is convex and smooth such that $\forall \vx, \vy\in \gX,$ 
\begin{align}
&g(\vx)\ge g(\vy) + \langle \nabla g(\vy), \vx-\vy\rangle,\\
&\|\nabla g(\vx)  - \nabla (\vy)\|\le L \|\vx-\vy\|,  
\end{align}
where $L>0$ is the smoothness constant of $g(\vx).$
\end{assumption}
\begin{assumption}\label{ass:A}
The matrix $\mA\in\bbR^{d\times n}$ has full column rank with 
the least and largest nonzero singular values $\sigma_{\min}>0$ and  $\sigma_{\max}>0$, respectively. So we have 
\begin{align}
\boldsymbol 0 \prec \sigma_{\min}^2\mI \preceq \mA^T\mA \preceq \sigma^2_{\max} \mI.
\end{align}
\end{assumption}
Assumption \ref{ass:g} is a standard assumption about convexity and smoothness of $g(\vx)$, which in turn is about $f(\vw)$ and $h(\vv).$  In Assumption \ref{ass:A}, the full column rank assumption of $\mA$ means that in our setting $n\le d$ and
is the key to obtain the accelerated rate $O(1/\sqrt{\epsilon}\log \frac{1}{\epsilon})$.  

\subsection{Technical Novelty} To break through the lower bound $O(1/\epsilon)$ and obtain the accelerated rate $O(1/\sqrt{\epsilon}\log \frac{1}{\epsilon})$, we propose a new algorithm, known as bilinear accelerated extragradient (BAXG), which has two loops: the outer loop uses the Nesterov's acceleration trick for the potential function $g(\vx)$, while the inner loop uses the Nesterov's acceleration trick for the subproblem involving the interacting term about $\mQ.$ By summing the total number of inner iterations, we obtain the desired complexity result. The BAXG method relies on the following three key technical novelties.

The first key is that we consider the Nesterov's acceleration strategy based on \emph{approximate backward Euler discretization.} The difference between forward Euler discretization and approximate backward Euler discretization can be found in \cite{diakonikolas2018accelerated,diakonikolas2019approximate}. The original acceleration strategy \cite{nesterov1998introductory} for first order methods is based on forward Euler discretization where we only need to evaluate one gradient in each iteration but it can not be generalized to high order methods.  Approximate
backward Euler discretization is original designed for accelerating high-order methods such as the accelerated cubic regularized Newton (ACNM) \cite{nesterov2008accelerating,song2019towards} method. Then it is found applications in first order methods for designing a variant \emph{accelerated extragradient descent} (AXGD) \cite{diakonikolas2018accelerated} of AGD, for convex minimization problems. The proposed BAXG method in this paper follows the same paradigm of AXGD for the potential function $g(\vx)$, but
does not make approximations about the interacting term $w.r.t.$ $\mQ$. With this treatment, the convergence rate in terms of the number of outer iterations is not affected by the interacting term and maintains the same rate $O(1/\sqrt{\epsilon})$ as the AXGD method. However, as a tradeoff, in each iteration of the BAXG method, we must solve an  easier but nontrivial bilinear convex concave minimax subproblem to certain accuracy.

The second key is that because $\vw\in\bbR^d$, the minimax subproblem can be equivalently converted into a {\em strongly convex minimization} subproblem.

\begin{table*}[t!]
\centering
\caption{Comparison of complexity results for solving the problem \eqref{eq:con-con-prob}. (In the complexity bound of the proposed BAXG method, $\tilde{O}$ hides the logarithmic factors about $\sigma_{\min}, \sigma_{\max}$ and $L.$)}\label{tb:result}
\begin{tabular}{|c|c|c|c|}
\hline
Algorithm                                  &          Complexity Bound          &       Acceleration (w/o)  & First Order Oracle                     \\
\hline
Mirror-Prox \cite{nemirovski2004prox}     &          $O\Big(\frac{L + \sigma_{\max}}{\epsilon}\Big)$     &  Without Acceleration               & \eqref{eq:oracle-2}            \\
\hline
Dual Extrapolation \cite{nesterov2007dual}     &              $O\Big(\frac{L + \sigma_{\max}}{\epsilon}\Big)$                &        Without Acceleration      & \eqref{eq:oracle-2}                  \\
\hline
MF-BS \cite{tseng2000modified,monteiro2011complexity}   &              $O\Big(\frac{L + \sigma_{\max}}{\epsilon}\Big)$                   &  Without Acceleration      &\eqref{eq:oracle-2}                                             \\
\hline
Primal-Dual\cite{chambolle2011first}    &   $O\Big(\frac{L + \sigma_{\max}}{\epsilon}\Big)$                      &  Without Acceleration  & \eqref{eq:oracle-2}   \\
\hline
APD \cite{chen2014optimal}   &                     $O\Big(\sqrt{\frac{L}{\epsilon}}+\frac{\sigma_{\max}}{\epsilon}\Big)$                        &   With Acceleration  & \eqref{eq:oracle-2}     \\
\hline
SAMP \cite{chen2017accelerated}    &          $O\Big(\sqrt{\frac{L}{\epsilon}}+\frac{\sigma_{\max}}{\epsilon}\Big)$                        &     With Acceleration   & \eqref{eq:oracle-2}     \\
\hline
BAXG (\textbf{This paper})              &    {$\tilde{O}\Big(\frac{\sigma_{\max}}{\sigma_{\min}}\sqrt{\frac{L}{\epsilon}} \log \frac{1}{\epsilon}\Big)$    }                            &  With Acceleration  & \eqref{eq:extend-oracle-2}  \\
\hline
\end{tabular}
\end{table*}

The third key is that when $\mA$ has full column rank,  the strongly convex minimization subproblem will have a condition number no more than $\frac{\sigma_{\max}^2}{\sigma_{\min}^2}$. As a result, to attain certain accuracy $\epsilon,$ the subproblem can be solved by the AGD method in at most $\tilde{O}(\frac{\sigma_{\max}}{\sigma_{\min}}\log \frac{1}{\epsilon})$ inner iterations, where   $\tilde{O}$ hides the logarithmic factors about $\sigma_{\min}, \sigma_{\max}$ and $L$. Finally by combining the number of iterations of the outer and inner loops, we prove that under Assumptions \ref{ass:g} and \ref{ass:A}, one can solve the bilinear convex concave minimax problem \eqref{eq:con-con-prob} in $\tilde{O}(\frac{\sigma_{\max}}{\sigma_{\min}}\sqrt{\frac{L}{{\epsilon}}} \log \frac{1}{\epsilon})$ number of accessing the extended first order oracle.

\subsection{Related Work}
Most methods for blinear convex concave minimax problems are designed for the general problem \eqref{eq:con-con}, where $\gW$ can be any closed convex set in $\bbR^d$ that admits an efficient projection operator and $\mA$ does not need to be full column rank. As a result, when applying to the problem \eqref{eq:con-con-prob}, they cannot explore the particular structure hence the rate is bounded by $O(1/\epsilon)$. In Table \ref{tb:result}, we give the related complexity results. As shown in Table \ref{tb:result}, by accessing the first order oracle \eqref{eq:oracle-2} and without Nesterov's acceleration, the convergence rate of the methods in \cite{nemirovski2004prox,nesterov2007dual,tseng2000modified,monteiro2011complexity,chambolle2011first} is $O\Big(\frac{L + \sigma_{\max}}{\epsilon}\Big)$; while by exploiting Nesterov's acceleration \cite{chen2014optimal,chen2017accelerated}, the dependence of the smoothness constant $L$ can be substantially improved, while the overall rates are still $O(1/\epsilon).$ In this paper, by accessing the extended first order oracle \eqref{eq:extend-oracle-2}, the BAXG method can explore the strong convexity induced from the full column rank assumption of $\mA.$ As a result, the rate is substantially improved to $\tilde{O}\Big(\frac{\sigma_{\max}}{\sigma_{\min}}\sqrt{\frac{L}{\epsilon}} \log \frac{1}{\epsilon}\Big)$. 

The full rank property of $\mA$ is also useful in settings where $f(\vw)$ is strongly convex but $h(\vv)$ is not strongly convex. As shown in \cite{wang2017exploiting,du2017stochastic,du2019linear}, by assuming $f(\vw)$ being strongly convex and $\mA$ having full column rank, first order primal-dual methods can have linear convergence rates. Compared with this line of research, our work does not assume strong convexity for either $f(\vw)$ or $h(\vv).$ Meanwhile, the proposed BAXG method is an accelerated extension for the extragradient method \cite{korpelevich1976extragradient}, while the algorithms proposed and studied in \cite{wang2017exploiting,du2017stochastic,du2019linear} are primal-dual methods \cite{chambolle2011first}.

\subsection{Motivating Examples}\label{eq:moti-exam}
Our main motivation to carefully examine the complexity for the class of problems in \eqref{eq:con-con-prob} is because many problems of practical importance can be reduced to this form.

The first example is the following linear equality constrained smooth optimization problem:
\begin{equation}
\min_{\vw\in\bbR^d} f(\vw) \quad \mbox{s.t.}\quad  \mA^T \vw = \b, \label{eq:smooth-linear}
\end{equation}
where $f(\vw)$ is smooth on $\bbR^d$, $\mA\in\bbR^{d\times n}$ with $n\le d$ is a matrix with full column rank. For instance, let $f(\vw)$ denote a smooth surrogate of the $\ell_1$-norm such as 
\begin{align}
R(\vw) \doteq \sum_{i=1}^d\frac{1}{a}(\log (1+\exp(a w_i) + \log (1+\exp(-a w_i)),  \label{eq:R}
\end{align}
where $a>0$\footnote{If $a$ is large, then $R(\vx)\approx \|\vx\|_1$ \cite{schmidt2007fast}}. It should be noted that $R(\vw)$ is smooth but not strongly convex. Then \eqref{eq:smooth-linear} corresponds to a smoothed version of basis pursuit \cite{chen2001atomic} for compressed sensing. The problem in \eqref{eq:smooth-linear} also arises in the subproblem of Newton method, where $f(\vw)$ denotes the quadratic approximation around a point and $\mA$ denotes the linear equality constraint. Similar to \eqref{eq:con-con-exam}, \eqref{eq:smooth-linear} is equivalent to the following minimax problem 
\begin{align}
\min_{\vw\in\bbR^d}\max_{\vv\in\bbR^n}f(\vw) + \langle \vw, \mA\vv\rangle - \vb^T\vv,    \label{eq:linear-smooth}
\end{align}
which is a particular instance of \eqref{eq:con-con-prob} and can be solved by our algorithm with the accelerated rate $O(\frac{1}{\sqrt{\epsilon}}\log \frac{1}{\epsilon})$.

In computer vision, an important model for robust face recognition is dense error correction \cite{wright2008dense}, which lends itself to solve the following problem: 
\begin{align}
\min_{\vw\in\bbR^d}\|\mA^T\vw - \vb\|_1+\lambda\|\vw\|_1, \label{eq:des-ori}
\end{align}
where $\vw\in\bbR^d, \mA\in\bbR^{d\times n}$ and $\lambda>0.$ As shown in \cite{wright2008dense}, $\mA$ can be a random matrix with $d$  larger than $n$. As a result, $\mA$ typically has full column rank. If we consider a smooth surrogate of $\|\vw\|_1$ such as the $R(\vw)$ in \eqref{eq:R}, then we have
\begin{align}
 &\min_{\vw\in\bbR^d}\|\mA^T\vw - \vb\|_1+\lambda R(\vw)\label{eq:desn-oo}\\
 =&\min_{\vw\in\bbR^d}\max_{\|\vv\|_{\infty}\le 1}  \lambda R(\vw) + \langle \vw, \mA\vv\rangle -\vb^T\vv,   \label{eq:dens}
\end{align}
which again is a particular instance of the problem \eqref{eq:con-con-prob}.

For the above nonsmooth optimization problem, the minimax reformulation in \eqref{eq:dens} is used to smooth the original nonsmooth problem \eqref{eq:desn-oo}. That is, by \emph{increasing the dimension} via dual variables $\vv\in\gV$, the nonsmooth problem is reduced to a smooth problem with a simple closed convex set $\gV$ that admits efficient projection operation. If so, the black box complexity result for the nonsmooth optimization can be significantly improved from $O(1/\epsilon^2)$ to $O(1/\epsilon)$ \cite{nesterov2005smooth}. At a high level, such a smooth technique indicates that increasing dimension helps find the underlying \emph{smooth structure} in nonsmooth problems. 

In this paper, to further reduce the complexity, the proposed BAXG algorithm essentially explores the opposite direction by \emph{decreasing the dimension}. When solving the nontrivial subproblem in each iteration, with $\vw\in\bbR^d$, we can reduce the subproblem $w.r.t.$ both $\vw$ and $\vv$ to a problem of $\vv$ only. Such a transformation helps us exploit the strong convexity of $\frac{1}{2}\vv^T\mA^T\mA\vv$ (which is implied by $\mA$ being full column rank) and allow the nontrivial  subproblem to be solved {\em in a linear rate}. As a result, when the BAXG method is used in solving nonsmooth problems such as \eqref{eq:desn-oo}, it increases the dimension of the original problem to explore the smooth structure in the nonsmooth problem, and yet decreases the dimension of the subproblem in each iteration to exploit the \emph{strongly convex structure} hidden in the bilinear interacting term in \eqref{eq:H}. Whenever such a strongly convex structure exists ($i.e.,$ Assumption \ref{ass:A}
 holds), the $O(1/\epsilon)$ rate can be significantly improved to $O(1/\sqrt{\epsilon}\log \frac{1}{\epsilon}).$ 

Notations: Let $\|\cdot\|$ denote the Euclidean norm $\|\cdot\|_2.$ For $k\in\{1,2,\ldots\}$, let $[k] \doteq \{1,2,\ldots, k\}.$

\section{Bilinear Accelerated Extragradient Method}\label{sec:axg}
In this section, we introduce the Bilinear Accelerated Extragradient (BAXG) method to solve the MVIP$(\nabla g + \mQ, \gX)$, as outlined in Algorithm \ref{alg:axg}, where we assume that the MVIP$(\nabla g + \mQ, \gX)$ satisfies Assumptions \ref{ass:g} and \ref{ass:A}. In the Step 2 of Algorithm \ref{alg:axg}, we set the values of two sequences $\{a_k\}$ and $\{A_k\}$, where $L$ is the smoothness constant of $g(\vx)$ in Assumption \ref{ass:g}. In the Step 3, we initialize $\vx_0$ and $\vz_0$ as the same value in $\gX.$ From Step 4 to 10, we perform $K$ iterations and return the last iterate $\vx_K.$ In the $K$ iterations, we generalize the approximate backward Euler discretization based acceleration methods to the MVIP$(\nabla g + \mQ, \gX)$ setting.

Following \cite{song2019towards}, we describe each iteration of Algorithm \ref{alg:axg} starting from the analysis of the ``estimation sequence'' in the Step 8. Then we show that how the other steps arise to cancel the error caused by approximate backward Euler discretization.
Before describing each iteration, we define the following linear function $\hat{f}(\vx; \vx_{i}, \hat{\vz}_i, \vy)$ $w.r.t.$ $\vx$ such that $\forall \vy\in\gX$, 
\begin{align}
\hat{f}(\vx; \vx_{i}, \hat{\vz}_i, \vy) & \doteq g(\vx_i) + \langle \nabla g(\vx_i), \vx - \vx_i\rangle + \langle \mQ\hat{\vz}_i, \vx-\vy\rangle, \label{eq:f-hat}
\end{align}
where $\hat{\vz}_i$ and $\vx_i$ are defined according to the Steps 6 and 7 of the $i$-th iteration of Algorithm \ref{alg:axg}. Compared with the linear function used in the classical estimation sequence \cite{nesterov1998introductory}, we use an extra term $\langle \mQ\hat{\vz}_i, \vx-\vy\rangle$ for the interacting term $\mQ$ in MVIP$(\nabla g + \mQ, \gX)$. Then in the Step 8, the estimation sequence $\psi_k(\vx)$ is defined as follows: $\forall k\ge 0, $ 
\begin{eqnarray}
\psi_k(\vx)\doteq\sum_{i=1}^{k} a_{i} \hat{f}(\vx; \vx_{i}, \hat{\vz}_i,\vy) +  \|\vx-\vx_0\|^2, \label{eq:dis}
\end{eqnarray}
where the sequence $\{a_i\}$ is specified in the Step 2.
Meanwhile, in \eqref{eq:dis}, when $k=0$, we let $\varphi_0(\vx) = \|\vx-\vx_0\|^2$ and thus $\vz_0=\argmin_{\vx\in\bbR^d} \|\vx-\vx_0\|^2 = \vx_0.$ From \eqref{eq:dis}, $\psi_k(\vx)$ can be recursively defined as
$$\psi_k(\vx) = \psi_{k-1}(\vx) + a_{k} \hat{f}(\vx; \vx_{k}, \hat{\vz}_k,\vy),$$  
thus the weighted sum of linear functions $\hat{f}(\vx; \vx_{i}, \hat{\vz}_i,\vy)$ 
in \eqref{eq:dis} can be computed recursively. As a result, the cost of the Step 8 of Algorithm \ref{alg:axg} is $O(d+n)$ plus the projection cost on $\gV.$

\begin{algorithm}[t!]
\caption{Bilinear Accelerated Extragradient Method}\label{alg:axg}
\begin{algorithmic}[1]
\STATE \textbf{Input: } The MVIP$(\nabla g + \mQ, \gX)$   satisfying Assumptions \ref{ass:g} and \ref{ass:A}.
\vspace{0.02in}
\STATE  $\forall k\ge 1, A_k = \frac{1}{4L}k^2, a_k = A_k - A_{k-1}$ with $A_0=0.$ 
\STATE $\vx_0 = \vz_0\in\gX$. 
\FOR{ $k = 1,2,\ldots, K$ }
\STATE $\hat{\vx}_{k-1} = \frac{A_{k-1}}{A_k}\vx_{k-1} + \frac{a_k}{A_k} \vz_{k-1}.$
\STATE Find a $\hat{\vz}_k\in \gX$ by Algorithm \ref{alg:pagd} such that $\forall \vz \in \gX,$
\begin{small}
\begin{align}
&\!\!\!\!\!\!a_{k}\Big\langle  \nabla {g}(  \hat{\vx}_{k-1}) + \mQ \vz_{k-1} +  \Big(\mQ+  \frac{2}{a_k}\mI \Big)(\hat{\vz}_k - \vz_{k-1}) , %
\hat{\vz}_k - \vz\Big\rangle 
-\frac{1}{2}\Big( \| \hat{\vz}_k - \vz_{k-1} \|^2 +  \|\hat{\vz}_k - \vz\|^2\Big)\le 0.\label{eq:cond}
\end{align}
\end{small}
\STATE $\vx_k = \hat{\vx}_{k-1} + \frac{a_k}{A_k}(\hat{\vz}_k - \vz_{k-1}).$
\STATE  $\vz_{k} =  \argmin_{\vx\in \gX}\psi_k(\vx)$ with $\psi_k(\vx)$ in \eqref{eq:dis}.
\ENDFOR
\STATE \textbf{return } $\vx_K$.
\end{algorithmic}	
\end{algorithm}

Based on the definition of the estimation sequence $\psi_k(\vx)$ in \eqref{eq:dis} and the optimality condition of $\vz_k$, we can give an upper bound for $\psi_k(\vz_k)$ as in Lemma \ref{lem:dis-upper}.
\begin{lemma}\label{lem:dis-upper}
 $\forall k\ge 0$ and $\forall \vy\in\gX$, one has $\psi_k(\vz_k)\le A_k g(\vy) + \|\vy-\vx_0\|^2$. 
\end{lemma}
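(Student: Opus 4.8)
The statement to prove is the bound $\psi_k(\vz_k) \le A_k g(\vy) + \|\vy - \vx_0\|^2$ for all $k \ge 0$ and all $\vy \in \gX$. The natural approach is induction on $k$, exploiting the recursive structure $\psi_k(\vx) = \psi_{k-1}(\vx) + a_k \hat f(\vx;\vx_k,\hat\vz_k,\vy)$ together with the fact that $\psi_k$ is a $2$-strongly convex function of $\vx$ (since it is a sum of linear functions plus $\|\vx - \vx_0\|^2$) and that $\vz_k$ is its exact minimizer over $\gX$.

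\textbf{Base case.} For $k = 0$ we have $\psi_0(\vx) = \|\vx - \vx_0\|^2$, so $\vz_0 = \vx_0$ and $\psi_0(\vz_0) = 0 = A_0 g(\vy) + \|\vy - \vx_0\|^2 - \|\vy-\vx_0\|^2$; wait — more carefully, $\psi_0(\vz_0) = 0 \le \|\vy - \vx_0\|^2 = A_0 g(\vy) + \|\vy - \vx_0\|^2$ since $A_0 = 0$. So the base case is immediate.

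\textbf{Inductive step.} Assume $\psi_{k-1}(\vz_{k-1}) \le A_{k-1} g(\vy) + \|\vy - \vx_0\|^2$. Since $\psi_{k-1}$ is $2$-strongly convex and $\vz_{k-1}$ minimizes it over $\gX$, we have the standard quadratic growth inequality $\psi_{k-1}(\vx) \ge \psi_{k-1}(\vz_{k-1}) + \|\vx - \vz_{k-1}\|^2$ for all $\vx \in \gX$. Evaluating the recursion at $\vx = \vz_k$ gives
\[
\psi_k(\vz_k) = \psi_{k-1}(\vz_k) + a_k \hat f(\vz_k; \vx_k, \hat\vz_k, \vy) \ge \psi_{k-1}(\vz_{k-1}) + \|\vz_k - \vz_{k-1}\|^2 + a_k \hat f(\vz_k;\vx_k,\hat\vz_k,\vy).
\]
But I actually want an \emph{upper} bound on $\psi_k(\vz_k)$, so the right move is the reverse: since $\vz_k$ minimizes $\psi_k$ over $\gX$, for any test point $\vx$ (in particular a cleverly chosen one, e.g. a convex combination of $\vz_{k-1}$ and $\vy$, or $\vz_{k-1}$ itself, or $\vy$ itself) we have $\psi_k(\vz_k) \le \psi_k(\vx)$. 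I would plug in $\vx = \vy$: then $\psi_k(\vy) = \psi_{k-1}(\vy) + a_k \hat f(\vy;\vx_k,\hat\vz_k,\vy)$. The term $\hat f(\vy;\vx_k,\hat\vz_k,\vy) = g(\vx_k) + \langle \nabla g(\vx_k), \vy - \vx_k\rangle + \langle \mQ\hat\vz_k, \vy - \vy\rangle = g(\vx_k) + \langle\nabla g(\vx_k),\vy-\vx_k\rangle \le g(\vy)$ by convexity of $g$ (Assumption~\ref{ass:g}); the interacting term vanishes because its second argument is $\vy - \vy = \vzero$. Hence $\psi_k(\vy) \le \psi_{k-1}(\vy) + a_k g(\vy)$, and iterating (or invoking $\psi_{k-1}(\vy) \le A_{k-1} g(\vy) + \|\vy-\vx_0\|^2$, which follows by the same argument applied at level $k-1$, i.e. a parallel induction on the quantity $\psi_k(\vy)$ rather than $\psi_k(\vz_k)$) gives $\psi_k(\vy) \le A_k g(\vy) + \|\vy - \vx_0\|^2$. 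Combining with $\psi_k(\vz_k) \le \psi_k(\vy)$ finishes the proof.

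\textbf{Main obstacle.} The only subtlety is bookkeeping: one must be careful that the inequality being inducted on is $\psi_k(\vz_k) \le A_k g(\vy) + \|\vy - \vx_0\|^2$ but the clean recursion is for $\psi_k(\vy)$, and the two are linked by the trivial $\psi_k(\vz_k) = \min_{\vx\in\gX}\psi_k(\vx) \le \psi_k(\vy)$. So the cleanest writeup inducts directly on the bound for $\psi_k(\vy)$ using $\psi_k(\vy) = \psi_{k-1}(\vy) + a_k\hat f(\vy;\vx_k,\hat\vz_k,\vy)$, the vanishing of the $\mQ$-term at $\vx = \vy$, convexity of $g$ to get $\hat f(\vy;\vx_k,\hat\vz_k,\vy) \le g(\vy)$, and then $A_{k-1} + a_k = A_k$. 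Everything else is routine; there is no real difficulty here, and notably this lemma does not yet use Assumption~\ref{ass:A} or the inner-loop accuracy condition~\eqref{eq:cond} — those enter only in the subsequent lower bound on $\psi_k(\vz_k)$.
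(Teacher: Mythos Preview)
Your proof is correct. It differs from the paper's argument in a small but pleasant way. The paper expands $\psi_k(\vz_k)$ and invokes the first-order optimality condition of $\vz_k$ in variational-inequality form, then uses the three-point identity $\langle 2(\vz_k-\vx_0),\vy-\vz_k\rangle+\|\vz_k-\vx_0\|^2\le\|\vy-\vx_0\|^2$ together with convexity of $g$. You instead use the zeroth-order consequence $\psi_k(\vz_k)\le\psi_k(\vy)$ of optimality and then evaluate $\psi_k(\vy)$ directly, observing that the bilinear term $\langle\mQ\hat\vz_i,\vx-\vy\rangle$ vanishes at $\vx=\vy$, so that $\hat f(\vy;\vx_i,\hat\vz_i,\vy)\le g(\vy)$ by convexity alone. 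This is shorter and bypasses the explicit optimality-condition algebra; the paper's route, on the other hand, makes the role of the optimality condition of $\vz_k$ more visible, which is stylistically consistent with how the companion lower-bound lemma is proved. Either way, as you note, neither Assumption~\ref{ass:A} nor condition~\eqref{eq:cond} is needed here. One cosmetic remark: the induction scaffolding in your write-up is unnecessary --- since the $\mQ$-term vanishes at $\vy$ for every summand simultaneously, you can bound $\psi_k(\vy)\le A_k g(\vy)+\|\vy-\vx_0\|^2$ in one line without recursion.
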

\begin{proof}
See Section \ref{subsec:lem:dis-upper}.
\end{proof}

In addition, we can establish a lower bound for $\psi_k(\vz_k)$ too as below.  
\begin{lemma}\label{lem:dis-lower}
$\forall i\in [k]$, let $$E_{i}\doteq a_{i}\left\langle \nabla g(\vx_{i}) + \mQ\hat{\vz}_i,  \hat{\vz}_i-\vz_i\right\rangle -   \|\vz_{i}-\vz_{i-1}\|^2.
$$
Then  $\forall k\ge 1$ and $\forall \vy\in\gX$, one has 
\begin{align}
&A_{k}(g(\vx_k)+\langle \mQ\vy, \vx_k-\vy\rangle) \le \psi_{k}(\vz_{k}) + \sum_{i=1}^{k} E_{i}.\label{eq:dis-lower}
\end{align}
\end{lemma}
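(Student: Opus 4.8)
The plan is to prove the bound by induction on $k$, following the standard estimation-sequence template but keeping careful track of the extra interacting term $\langle \mQ\vy, \cdot\rangle$ that does not appear in the classical analysis. The base case $k=0$ is trivial since $A_0 = 0$ and $\psi_0(\vz_0) = \|\vz_0 - \vx_0\|^2 = 0$, so both sides vanish (there being no $E_i$ terms). For the inductive step, I would start from the recursion $\psi_k(\vz_k) = \psi_{k-1}(\vz_k) + a_k \hat{f}(\vz_k; \vx_k, \hat{\vz}_k, \vy)$ and use strong convexity of $\psi_{k-1}$ (it is a sum of affine functions plus $\|\vx - \vx_0\|^2$, hence $2$-strongly convex) around its minimizer $\vz_{k-1}$ to write $\psi_{k-1}(\vz_k) \ge \psi_{k-1}(\vz_{k-1}) + \|\vz_k - \vz_{k-1}\|^2$. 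Combining this with the inductive hypothesis $\psi_{k-1}(\vz_{k-1}) \ge A_{k-1}(g(\vx_{k-1}) + \langle \mQ\vy, \vx_{k-1} - \vy\rangle) - \sum_{i=1}^{k-1} E_i$ gives a lower bound on $\psi_k(\vz_k)$ in terms of $A_{k-1}(\cdots)$, the quantity $\|\vz_k - \vz_{k-1}\|^2$, and $a_k \hat{f}(\vz_k; \vx_k, \hat{\vz}_k, \vy)$.

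The remaining work is to show that this lower bound is at least $A_k(g(\vx_k) + \langle \mQ\vy, \vx_k - \vy\rangle) - \sum_{i=1}^k E_i$, i.e. after cancelling the common $-\sum_{i=1}^{k-1}E_i$, that
\[
A_{k-1}(g(\vx_{k-1}) + \langle \mQ\vy, \vx_{k-1}-\vy\rangle) + \|\vz_k-\vz_{k-1}\|^2 + a_k \hat{f}(\vz_k;\vx_k,\hat{\vz}_k,\vy) \ge A_k(g(\vx_k)+\langle\mQ\vy,\vx_k-\vy\rangle) - E_k.
\]
Here I would expand $\hat{f}(\vz_k;\vx_k,\hat{\vz}_k,\vy) = g(\vx_k) + \langle \nabla g(\vx_k), \vz_k - \vx_k\rangle + \langle \mQ\hat{\vz}_k, \vz_k - \vy\rangle$, move the definition of $E_k = a_k\langle \nabla g(\vx_k) + \mQ\hat{\vz}_k, \hat{\vz}_k - \vz_k\rangle - \|\vz_k - \vz_{k-1}\|^2$ to the left, so that the $\|\vz_k - \vz_{k-1}\|^2$ terms cancel and the gradient/interacting terms regroup into $a_k\langle \nabla g(\vx_k) + \mQ\hat{\vz}_k, \hat{\vz}_k - \vx_k\rangle$ plus $a_k\langle \mQ\hat{\vz}_k, \vx_k - \vy\rangle$ minus $a_k g(\vx_k)$ corrections. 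Then I would use convexity of $g$ in the form $A_{k-1}g(\vx_{k-1}) + a_k g(\vx_k) + a_k\langle \nabla g(\vx_k), \hat{\vz}_k - \vx_k\rangle \ge A_k g(\vx_k) + A_{k-1}\langle \nabla g(\vx_k), \vx_{k-1} - \vx_k\rangle + a_k\langle\nabla g(\vx_k),\hat{\vz}_k - \vx_k\rangle$, and invoke the relation $\vx_k = \hat{\vx}_{k-1} + \frac{a_k}{A_k}(\hat{\vz}_k - \vz_{k-1})$ together with $\hat{\vx}_{k-1} = \frac{A_{k-1}}{A_k}\vx_{k-1} + \frac{a_k}{A_k}\vz_{k-1}$ (Steps 5 and 7), which gives the identity $A_{k-1}(\vx_{k-1} - \vx_k) + a_k(\hat{\vz}_k - \vx_k) = 0$, killing the residual gradient terms. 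The $\mQ$-terms should similarly collapse: the linear functional $\langle \mQ\vy, \cdot\rangle$ is affine so $A_{k-1}\langle\mQ\vy,\vx_{k-1}-\vy\rangle + a_k\langle\mQ\vy,\vz_k-\vy\rangle$ compares to $A_k\langle\mQ\vy,\vx_k-\vy\rangle$ up to a term that must be matched against the leftover $a_k\langle\mQ\hat{\vz}_k,\cdot\rangle$ pieces; here skew-symmetry of $\mQ$ (so $\langle\mQ\vu,\vu\rangle = 0$) is what makes the cross terms vanish.

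The main obstacle I anticipate is bookkeeping the $\mQ$-terms: unlike the pure-minimization AXGD analysis, the target inequality carries $\langle\mQ\vy, \vx_k - \vy\rangle$ on the right, and the estimation sequence only supplies $\langle\mQ\hat{\vz}_i, \vx - \vy\rangle$ terms evaluated at the iterates, not at $\vy$. Reconciling these requires exploiting that $\mQ$ is skew-symmetric together with the precise affine combination defining $\vx_k$; the condition \eqref{eq:cond} defining $\hat{\vz}_k$ (which controls the backward-Euler error in the $\mQ$ direction) is presumably not needed for this particular lemma — it will be consumed later when bounding $\sum_i E_i$ — so here the proof should be a clean algebraic identity once the right groupings are found. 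I would also double-check the sign conventions in $E_i$ and the factor-of-$2$ in the strong convexity constant of $\psi_{k-1}$, since those are the easiest places to slip.
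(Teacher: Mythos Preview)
Your proposal is correct and follows essentially the same route as the paper: $2$-strong convexity of $\psi_{i-1}$ at its minimizer $\vz_{i-1}$, convexity of $g$ at $\vx_i$, the affine identity $A_k\vx_k=A_{k-1}\vx_{k-1}+a_k\hat{\vz}_k$ (from Steps~5 and~7), and skew-symmetry of $\mQ$ to turn $\langle\mQ\hat{\vz}_k,\hat{\vz}_k-\vy\rangle$ into $\langle\mQ\vy,\hat{\vz}_k-\vy\rangle$ (the paper states this as monotonicity of $\mQ$, which for a skew-symmetric operator is in fact an equality). The only cosmetic difference is that the paper derives a single one-step inequality and telescopes it, whereas you package the same computation as induction on $k$; your slightly muddled line about ``$a_k\langle\mQ\vy,\vz_k-\vy\rangle$'' should read $a_k\langle\mQ\vy,\hat{\vz}_k-\vy\rangle$, after which the $\mQ$-terms combine exactly with $A_{k-1}\langle\mQ\vy,\vx_{k-1}-\vy\rangle$ to give $A_k\langle\mQ\vy,\vx_k-\vy\rangle$ via the same affine identity.
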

\begin{proof}
See Section \ref{subsec:lem:dis-lower}.
\end{proof}

In Lemma \ref{lem:dis-lower}, $\{E_i\}$ can be viewed as a sequence of ``error'' terms that we hope to be less than $0$. If we set $\hat{\vz}_i = \vz_i$ which is called \emph{backward Euler discretization}, then $E_{i}\le 0$. However, in this case the resulted problem in the Step 8 will become a fixed point problem and is as difficult as the original problem. To address this difficulty, we can consider \emph{approximate backward Euler discretization} by approximating $\vz_i$ with a solution of an easier subproblem. To attain this goal, $\forall i\in [k],$ we consider a convex combination of $\vx_{i-1}$ and $\vz_{i-1}$ as $\hat{\vx}_{i-1}$ in the Step 5 of the $i$ iteration. Then we approximate $\vz_i$ by $\hat{\vz}_i$ as a solution to a subproblem in the Step 6, which is to be solved by Algorithm \ref{alg:pagd} in Section \ref{sec:subprob}. Based on the careful choice of $\hat{\vz}_i$ and $\vx_i$, we have Lemma \ref{lem:E-3}. 

\begin{lemma}\label{lem:E-3}
 $\forall  i\in [k]$, one has
\begin{align}
E_i\le \frac{1}{2}\Big(\Big(\frac{a_i^2L}{A_i}\Big)^2 -1\Big)\|\hat{\vz}_i - \vz_{i-1}\|^2. \label{eq:E-5}
\end{align}
\end{lemma}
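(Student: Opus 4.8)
The plan is to start from the definition $E_i = a_i\langle \nabla g(\vx_i) + \mQ\hat{\vz}_i, \hat{\vz}_i - \vz_i\rangle - \|\vz_i - \vz_{i-1}\|^2$ and split the inner product into two pieces: one involving $\nabla g(\hat{\vx}_{i-1}) + \mQ\vz_{i-1} + (\mQ + \tfrac{2}{a_i}\mI)(\hat{\vz}_i - \vz_{i-1})$, which is exactly the vector appearing in the inexactness condition \eqref{eq:cond} of Step 6, and a remainder that collects the discrepancy between $\nabla g(\vx_i)$ and $\nabla g(\hat{\vx}_{i-1})$ together with the $\mQ$-terms. First I would use the definition $\vx_i = \hat{\vx}_{i-1} + \tfrac{a_i}{A_i}(\hat{\vz}_i - \vz_{i-1})$ from Step 7 to express $\nabla g(\vx_i) - \nabla g(\hat{\vx}_{i-1})$ and bound its norm by $L\|\vx_i - \hat{\vx}_{i-1}\| = \tfrac{a_i L}{A_i}\|\hat{\vz}_i - \vz_{i-1}\|$ via Assumption \ref{ass:g}. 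The antisymmetry of $\mQ$ (so that $\langle \mQ\vu, \vu\rangle = 0$ for all $\vu$) should let the genuinely bilinear cross-terms cancel, leaving only terms that are controlled either by \eqref{eq:cond} or by the smoothness estimate.

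Concretely, the plan is: write $E_i = a_i\langle R_i, \hat{\vz}_i - \vz_i\rangle + a_i\langle \nabla g(\vx_i) - \nabla g(\hat{\vx}_{i-1}) - (\mQ + \tfrac{2}{a_i}\mI)(\hat{\vz}_i - \vz_{i-1}) + \tfrac{2}{a_i}(\hat{\vz}_i - \vz_{i-1}) + \mQ(\hat{\vz}_i - \vz_{i-1}), \hat{\vz}_i - \vz_i\rangle - \|\vz_i - \vz_{i-1}\|^2$ where $R_i = \nabla g(\hat{\vx}_{i-1}) + \mQ\vz_{i-1} + (\mQ + \tfrac{2}{a_i}\mI)(\hat{\vz}_i - \vz_{i-1})$ is the Step-6 vector. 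Applying \eqref{eq:cond} with the particular choice $\vz = \vz_i$ bounds $a_i\langle R_i, \hat{\vz}_i - \vz_i\rangle \le \tfrac12(\|\hat{\vz}_i - \vz_{i-1}\|^2 + \|\hat{\vz}_i - \vz_i\|^2)$. After the $\mQ(\hat{\vz}_i - \vz_{i-1})$ pieces cancel through antisymmetry, what remains is $a_i\langle \nabla g(\vx_i) - \nabla g(\hat{\vx}_{i-1}), \hat{\vz}_i - \vz_i\rangle$, which by Cauchy–Schwarz and smoothness is at most $a_i \cdot \tfrac{a_i L}{A_i}\|\hat{\vz}_i - \vz_{i-1}\| \cdot \|\hat{\vz}_i - \vz_i\| = \tfrac{a_i^2 L}{A_i}\|\hat{\vz}_i - \vz_{i-1}\|\,\|\hat{\vz}_i - \vz_i\|$. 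Then I would combine the $\|\hat{\vz}_i - \vz_i\|^2$ and $\|\vz_i - \vz_{i-1}\|^2$ terms, using $\|\vz_i - \vz_{i-1}\|^2 \ge \tfrac12\|\hat{\vz}_i - \vz_{i-1}\|^2 - \|\hat{\vz}_i - \vz_i\|^2$ (triangle inequality in the form $\|a+b\|^2 \le 2\|a\|^2 + 2\|b\|^2$), so that the leftover negative $\|\hat{\vz}_i - \vz_i\|^2$ contributions absorb both the $\tfrac12\|\hat{\vz}_i - \vz_i\|^2$ from \eqref{eq:cond} and the cross term from Young's inequality applied to $\tfrac{a_i^2 L}{A_i}\|\hat{\vz}_i - \vz_{i-1}\|\,\|\hat{\vz}_i - \vz_i\|$.

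The delicate part will be the bookkeeping in the final Young's/AM–GM step: I need to split $\tfrac{a_i^2 L}{A_i}\|\hat{\vz}_i - \vz_{i-1}\|\,\|\hat{\vz}_i - \vz_i\|$ as $\tfrac12 t \|\hat{\vz}_i - \vz_{i-1}\|^2 + \tfrac{1}{2t}(\tfrac{a_i^2 L}{A_i})^2 \|\hat{\vz}_i - \vz_i\|^2$ and choose the free parameter $t$ (together with how the $\|\hat{\vz}_i - \vz_i\|^2$ budget of size roughly $1$ from the combination of \eqref{eq:cond} and $\|\vz_i-\vz_{i-1}\|^2$ is allocated) so that the net coefficient of $\|\hat{\vz}_i - \vz_i\|^2$ is nonpositive while the coefficient of $\|\hat{\vz}_i - \vz_{i-1}\|^2$ comes out to be exactly $\tfrac12((\tfrac{a_i^2 L}{A_i})^2 - 1)$. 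Since $a_k^2 L / A_k \le 1$ (from $A_k = k^2/(4L)$, one checks $a_k = A_k - A_{k-1} = (2k-1)/(4L)$, hence $a_k^2 L/A_k = (2k-1)^2/(4Lk^2)\cdot L = (2k-1)^2/(4k^2) < 1$), this quantity is negative, consistent with the claim and with the role of $E_i$ as an error term that should stay $\le 0$; I expect the constants to line up precisely with the choice of $\tfrac{2}{a_i}\mI$ in \eqref{eq:cond}, which is presumably why that specific regularization weight was chosen. The only real obstacle is getting this constant-chasing exactly right; everything else is a direct consequence of antisymmetry of $\mQ$, $L$-smoothness, and the inexactness condition \eqref{eq:cond}.
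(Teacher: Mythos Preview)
Your overall architecture matches the paper's, but the bookkeeping you flagged as ``delicate'' is in fact already off in two places, and as written the argument will not close with the exact constant $\tfrac12\big((\tfrac{a_i^2L}{A_i})^2-1\big)$.

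First, the decomposition is wrong by one term. The remainder $\nabla g(\vx_i)+\mQ\hat{\vz}_i-R_i$ equals
\[
\nabla g(\vx_i)-\nabla g(\hat{\vx}_{i-1})+\mQ(\hat{\vz}_i-\vz_{i-1})-(\mQ+\tfrac{2}{a_i}\mI)(\hat{\vz}_i-\vz_{i-1})
=\nabla g(\vx_i)-\nabla g(\hat{\vx}_{i-1})-\tfrac{2}{a_i}(\hat{\vz}_i-\vz_{i-1}),
\]
so the extra $+\tfrac{2}{a_i}(\hat{\vz}_i-\vz_{i-1})$ you inserted is spurious, and antisymmetry of $\mQ$ plays no role whatsoever --- the $\mQ$ pieces cancel by plain linearity. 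The correct exact identity is therefore
\[
E_i=a_i\langle R_i,\hat{\vz}_i-\vz_i\rangle+a_i\langle\nabla g(\vx_i)-\nabla g(\hat{\vx}_{i-1}),\hat{\vz}_i-\vz_i\rangle
-2\langle\hat{\vz}_i-\vz_{i-1},\hat{\vz}_i-\vz_i\rangle-\|\vz_i-\vz_{i-1}\|^2.
\]

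Second, you should not use the lossy bound $\|\vz_i-\vz_{i-1}\|^2\ge\tfrac12\|\hat{\vz}_i-\vz_{i-1}\|^2-\|\hat{\vz}_i-\vz_i\|^2$; it flips the sign of the $\|\hat{\vz}_i-\vz_i\|^2$ budget and leaves you with $+\tfrac32\|\hat{\vz}_i-\vz_i\|^2$ after applying \eqref{eq:cond}, which cannot be absorbed. The paper instead uses the \emph{exact} three-point identity
\[
-\|\vz_i-\vz_{i-1}\|^2=-\|\hat{\vz}_i-\vz_{i-1}\|^2+2\langle\hat{\vz}_i-\vz_{i-1},\hat{\vz}_i-\vz_i\rangle-\|\hat{\vz}_i-\vz_i\|^2,
\]
which cancels the $-2\langle\hat{\vz}_i-\vz_{i-1},\hat{\vz}_i-\vz_i\rangle$ term above and leaves
\[
E_i=a_i\langle R_i,\hat{\vz}_i-\vz_i\rangle+a_i\langle\nabla g(\vx_i)-\nabla g(\hat{\vx}_{i-1}),\hat{\vz}_i-\vz_i\rangle
-\|\hat{\vz}_i-\vz_{i-1}\|^2-\|\hat{\vz}_i-\vz_i\|^2.
\]
From here, Cauchy--Schwarz and smoothness give the $\tfrac{a_i^2L}{A_i}\|\hat{\vz}_i-\vz_{i-1}\|\|\hat{\vz}_i-\vz_i\|$ cross term; Young with $t=1$ turns it into $\tfrac12(\tfrac{a_i^2L}{A_i})^2\|\hat{\vz}_i-\vz_{i-1}\|^2+\tfrac12\|\hat{\vz}_i-\vz_i\|^2$, and then \eqref{eq:cond} with $\vz=\vz_i$ kills exactly $a_i\langle R_i,\hat{\vz}_i-\vz_i\rangle-\tfrac12\|\hat{\vz}_i-\vz_{i-1}\|^2-\tfrac12\|\hat{\vz}_i-\vz_i\|^2$. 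No free parameter is needed; the constants line up on the nose because of the $\tfrac{2}{a_i}\mI$ weight in \eqref{eq:cond}, precisely through the three-point identity rather than a triangle inequality.
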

\begin{proof}
Sec Section \ref{sec:lem:E-3}.
\end{proof}
Then by Lemma \ref{lem:E-3}, $\forall i\in[k],$ $E_i\le 0$ if the sequences $\{a_i\}, \{A_i\}$ are set according to the Step 2. Then combining Lemmas \ref{lem:dis-upper}, \ref{lem:dis-lower} and \ref{lem:E-3}, we have Theorem \ref{thm:axg}.

\begin{theorem}[Iteration Complexity of Algorithm \ref{alg:axg}]\label{thm:axg}
In all the $k$-th iterations, we have $\forall \vy \in \gX,$ 
\begin{eqnarray}
g(\vx_k)\! -\! g(\vy)+\langle \mQ\vy, \vx_k\!-\!\vy\rangle \le \frac {4L\|\vy - \vx_0\|^2}{k^2}.
\end{eqnarray}
\end{theorem}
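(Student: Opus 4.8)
The plan is to simply chain together the three lemmas already established. First I would invoke Lemma~\ref{lem:dis-lower} to get, for every $\vy\in\gX$,
\[
A_k\big(g(\vx_k)+\langle \mQ\vy,\vx_k-\vy\rangle\big)\le \psi_k(\vz_k)+\sum_{i=1}^k E_i,
\]
and then Lemma~\ref{lem:dis-upper} to replace $\psi_k(\vz_k)$ by the explicit bound $A_k g(\vy)+\|\vy-\vx_0\|^2$. Combining these two yields
\[
A_k\big(g(\vx_k)-g(\vy)+\langle \mQ\vy,\vx_k-\vy\rangle\big)\le \|\vy-\vx_0\|^2+\sum_{i=1}^k E_i,
\]
so the whole theorem reduces to showing $\sum_{i=1}^k E_i\le 0$ together with dividing by $A_k=\tfrac{1}{4L}k^2$.

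Next I would control the error terms using Lemma~\ref{lem:E-3}: since $E_i\le \tfrac12\big((a_i^2 L/A_i)^2-1\big)\|\hat\vz_i-\vz_{i-1}\|^2$, it suffices to verify $a_i^2 L/A_i\le 1$ for the sequences fixed in Step~2 of Algorithm~\ref{alg:axg}. With $A_i=\tfrac{1}{4L}i^2$ we have $a_i=A_i-A_{i-1}=\tfrac{1}{4L}(2i-1)$, hence
\[
\frac{a_i^2 L}{A_i}=\frac{(2i-1)^2}{4i^2}<1\qquad\text{for all }i\ge 1,
\]
because $(2i-1)^2=4i^2-4i+1<4i^2$. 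Therefore each coefficient $(a_i^2L/A_i)^2-1$ is strictly negative, giving $E_i\le 0$ for every $i$, and a fortiori $\sum_{i=1}^k E_i\le 0$.

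Putting the pieces together, the two displayed inequalities and $\sum E_i\le0$ give $A_k\big(g(\vx_k)-g(\vy)+\langle \mQ\vy,\vx_k-\vy\rangle\big)\le \|\vy-\vx_0\|^2$, and substituting $A_k=k^2/(4L)$ produces exactly
\[
g(\vx_k)-g(\vy)+\langle \mQ\vy,\vx_k-\vy\rangle\le \frac{4L\|\vy-\vx_0\|^2}{k^2}.
\]
There is no real obstacle here beyond the elementary arithmetic check that the prescribed $\{a_i\},\{A_i\}$ make $a_i^2L/A_i\le1$; all the genuine work has already been absorbed into Lemmas~\ref{lem:dis-upper}--\ref{lem:E-3}. (One should also note the degenerate case $k=0$, where $A_0=0$ and the inequality of Lemma~\ref{lem:dis-lower} is vacuous, so the theorem statement is only asserted for $k\ge1$.)
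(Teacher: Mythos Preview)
Your proposal is correct and follows essentially the same route as the paper: chain Lemmas~\ref{lem:dis-lower}, \ref{lem:dis-upper}, and \ref{lem:E-3}, verify the arithmetic condition $a_i^2L/A_i\le 1$ for the sequences of Step~2, and divide by $A_k=k^2/(4L)$. In fact you supply more detail than the paper does on the check $(2i-1)^2/(4i^2)<1$, which the paper simply asserts.
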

\begin{proof}
By the definition of $A_k, a_k$ in the Step 2 of Algorithm \ref{alg:axg}, we have $\forall k\ge 1,$ 
\begin{align}
\frac{a_k^2L}{A_k} \le 1. \label{thm:axg0}
\end{align}
So by combining Lemmas \ref{lem:dis-upper}, \ref{lem:dis-lower} and \eqref{thm:axg0}, we have $\forall k\ge 1$ and $\forall \vy\in\gX,$  
\begin{align}
&A_{k}(g(\vx_k)+\langle \mQ\vy, \vx_k-\vy\rangle) \nonumber\\
\le& \psi_{k}(\vz_{k}) + \sum_{i=1}^{k} E_{i} \; \le\; A_k g(\vy) + \|\vy-\vx_0\|^2  + \sum_{i=1}^{k} E_{i}\nonumber\\ 
\le& A_k g(\vy) + \|\vy\!-\!\vx_0\|^2 +  \frac{1}{2}\sum_{i=1}^{k}\Big(\Big(\frac{a_i^2L}{A_i}\Big)^2\!\!-1\Big)\|\hat{\vz}_i \!-\! \vz_{i-1}\|^2 \nonumber\\ 
\le& A_k g(\vy) + \|\vy-\vx_0\|^2. \label{eq:thm:axg-1}
\end{align}
Then by the definition of $A_k = \frac{k^2}{4L}$ and after a simple arrangement,  Theorem \ref{thm:axg} is proved.
\end{proof}

By Theorem \ref{thm:axg}, to find an $\epsilon$-accurate solution satisfying \eqref{eq:appro-weak}, we only need $O(\frac{1}{\sqrt{\epsilon}})$ number of (outer) iterations. The remaining issue is to show the subproblem \eqref{eq:cond} can be solved in linear rate $O(\log \frac{1}{\epsilon})$.

\section{Solving the Subproblem}\label{sec:subprob}
In this section, we show how to efficiently solve the subproblem \eqref{eq:cond} of Step 6 in the $k$-th iteration of Algorithm \ref{alg:axg}. 
To simplify and differentiate from the notion used in the main algorithm, we define 
\vspace{-2mm}
\begin{align}
c\doteq   a_k = (2k-1)/L \label{eq:sim-notation2}  
\vspace{-1mm}
\end{align}
and use the following notation for the subproblem (see the original definition of $\vx$ in \eqref{eq:H})
\begin{align}
\left[\begin{matrix}
     \vw^0 \\
     \vv^0 
\end{matrix}\right]\doteq\nabla {g}(\hat{\vx}_{k-1}) + \mQ \vz_{k-1}, \;
\left[\begin{matrix}
     \vw^1 \\
     \vv^1 
\end{matrix}\right]\doteq& \vz_{k-1},\;
\left[\begin{matrix}
     \tilde{\vw} \\
     \tilde{\vv} 
\end{matrix}\right]\doteq\hat{\vz}_k.
\label{eq:sim-notation}
\end{align}
With this notation, all the $\vw^0,\vv^0, \vw^1$ and $\vv^1$ are constants as far as the subproblem is concerned. $\tilde{\vw}\in\bbR^d,\tilde{\vv}\in\gV$ are the solutions we hope to find for the subproblem. To find $\tilde{\vw}$ and $\tilde{\vv}$ efficiently, a key observation is that due to $\tilde{\vw}\in\bbR^d,$ it is enough to express $\tilde{\vw}$ by a closed form $w.r.t.$ $\tilde{\vv}$ and then solve a {\em strongly convex} minimization problem to find $\tilde{\vv}$. 
We state this fact in Lemma \ref{lem:refor}.
\begin{lemma}\label{lem:refor}
Let $\{c, \vw^0,\vv^0,\vw^1,\vv^1, \tilde{\vw}, \tilde{\vv}\}$ be defined in \eqref{eq:sim-notation2} and \eqref{eq:sim-notation}. Then by setting 
\begin{align}
\tilde{\vw} = \vw^1 - \frac{c}{2} (\vw^0 + \mA(\tilde{\vv}-\vv^1)),  \label{eq:tilde-w-v} 
\end{align}
then  the Step 6 of the $k$-th iteration of Algorithm \ref{alg:axg} is reduced to finding a solution $\tilde{\vv}\in\gV$ such that $\forall \vv \in \gV$,    
\begin{align}
&\big\langle \nabla l(\tilde{\vv}), \tilde{\vv}-  {\vv}\big\rangle \le \frac{2}{c^2}\|\tilde{\vv} - \vv^1\|\| \tilde{\vv} - \vv\|, \label{eq:cond-refor3}   
\end{align}
where $l(\vv)$ is a quadratic function defined by: $\forall \vv\in\gV,$
\begin{align}
l(\vv) \;\doteq\; & \frac{1}{2}(\vv-\vv^1)^T\Big(\mA^T\mA + \frac{4}{c^2}\mI\Big)(\vv-\vv^1)+ \Big( \frac{2}{c}\vv^0 + \mA^T\vw^0\Big)^T (\vv-\vv^1). \label{eq:cond-refor4}
\end{align}
\end{lemma}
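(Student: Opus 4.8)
The plan is to start from the subproblem condition \eqref{eq:cond} in Step 6, substitute the simplified notation of \eqref{eq:sim-notation2} and \eqref{eq:sim-notation}, and then eliminate the $\vw$-component. Writing $\hat{\vz}_k - \vz_{k-1} = (\tilde\vw - \vw^1, \tilde\vv - \vv^1)$ and using the block form of $\mQ$ from \eqref{eq:H}, the inner-product term $a_k\langle \nabla g(\hat\vx_{k-1}) + \mQ\vz_{k-1} + (\mQ + \tfrac{2}{a_k}\mI)(\hat\vz_k - \vz_{k-1}),\,\hat\vz_k - \vz\rangle$ splits into a $\vw$-block and a $\vv$-block. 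Because $\tilde\vw\in\bbR^d$ is unconstrained, the $\vw$-block of \eqref{eq:cond} must hold for all $\vw\in\bbR^d$; the only way a linear-plus-negative-quadratic expression in $(\tilde\vw - \vw)$ can be $\le 0$ for every $\vw$ (after distributing the $-\tfrac12\|\hat\vz_k - \vz\|^2$ term, which contributes $-\tfrac12\|\tilde\vw - \vw\|^2$) is for the affine coefficient of $\tilde\vw - \vw$ to vanish. Setting that $\vw$-block coefficient to zero gives exactly $\vw^0 + \mA(\tilde\vv - \vv^1) + \tfrac{2}{c}(\tilde\vw - \vw^1) = 0$, i.e. the closed form \eqref{eq:tilde-w-v}.

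Next I would substitute this closed form for $\tilde\vw$ back into the $\vv$-block of \eqref{eq:cond}. The $\vv$-block reads (with $c = a_k$)
$c\langle -\mA^T\vw^1 + \vv^0 - \mA^T(\tilde\vw - \vw^1) + \tfrac{2}{c}(\tilde\vv - \vv^1),\,\tilde\vv - \vv\rangle - \tfrac12(\|\tilde\vv - \vv^1\|^2 + \|\tilde\vv - \vv\|^2) \le 0$,
using $-\mA^T\vz_{k-1}$ for the $\vv$-component of $\mQ\vz_{k-1}$ plus the $\vv^0$ piece of $\nabla g$, and $-\mA^T$ for the off-diagonal block of $\mQ$ acting on $\tilde\vw - \vw^1$. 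Plugging $\tilde\vw - \vw^1 = -\tfrac{c}{2}(\vw^0 + \mA(\tilde\vv - \vv^1))$ turns $-\mA^T(\tilde\vw - \vw^1)$ into $\tfrac{c}{2}\mA^T\vw^0 + \tfrac{c}{2}\mA^T\mA(\tilde\vv - \vv^1)$. Collecting terms, the bracket becomes $\nabla l(\tilde\vv)$ where $l$ is precisely the quadratic in \eqref{eq:cond-refor4}: the $\mA^T\mA$ part comes from the substituted off-diagonal term, the $\tfrac{4}{c^2}\mI$ part from the $\tfrac{2}{c}(\tilde\vv - \vv^1)$ term (scaled by the overall $c$), and the linear term $\tfrac{2}{c}\vv^0 + \mA^T\vw^0$ from the constant pieces. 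After dividing through by the scalar $c$ and moving the $-\tfrac{1}{2c}(\|\tilde\vv - \vv^1\|^2 + \|\tilde\vv - \vv\|^2)$ to the right-hand side, I would bound $\|\tilde\vv - \vv^1\|^2 + \|\tilde\vv - \vv\|^2 \le 2\|\tilde\vv - \vv^1\|\|\tilde\vv - \vv\| \cdot (\text{appropriate constant})$ — actually more carefully, $\tfrac{1}{2c}(\|\tilde\vv - \vv^1\|^2 + \|\tilde\vv - \vv\|^2) \ge \tfrac{1}{c}\|\tilde\vv - \vv^1\|\|\tilde\vv - \vv\|$ by AM–GM, but since we want an upper bound on $\langle \nabla l(\tilde\vv), \tilde\vv - \vv\rangle$ we instead keep one of the squared terms and obtain the stated form with the $\tfrac{2}{c^2}$ constant after accounting for the factor of $c$ that was divided out.

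The step I expect to be the main obstacle is bookkeeping the constant $\tfrac{2}{c^2}$ on the right-hand side of \eqref{eq:cond-refor3} and the sign conventions: the original condition \eqref{eq:cond} has $a_k$ multiplying the inner product but only $1$ (not $a_k$) multiplying the quadratic penalty, so after the $\vw$-elimination and division by $c = a_k$ one is left with a $\tfrac{1}{c}$-weighted penalty, and one must verify that the resulting coefficient matches $\tfrac{2}{c^2}$ exactly rather than $\tfrac{1}{c^2}$ or $\tfrac{1}{2c^2}$ — this hinges on how the two squared norms in \eqref{eq:cond} are handled (keeping $\|\tilde\vv - \vv^1\|^2$ to later drive the linear-rate subproblem analysis, versus bounding the cross term). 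I would also double-check that eliminating $\tilde\vw$ preserves the ``for all $\vz\in\gX$'' quantifier correctly: since $\gX = \bbR^d \times \gV$, the quantifier factors, the $\vw$-part forces \eqref{eq:tilde-w-v} and then the $\vv$-part, quantified over $\vv\in\gV$, yields \eqref{eq:cond-refor3}. Everything else is routine expansion of quadratics.
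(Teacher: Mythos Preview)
Your approach matches the paper's: split \eqref{eq:cond} into $\vw$- and $\vv$-blocks, eliminate the $\vw$-block via \eqref{eq:tilde-w-v}, substitute into the $\vv$-block to recognize $\tfrac{c^2}{2}\langle\nabla l(\tilde\vv),\tilde\vv-\vv\rangle$, and then use $2ab\le a^2+b^2$ to pass from the sum-of-squares right-hand side to the product form \eqref{eq:cond-refor3}. Two small corrections. First, by \eqref{eq:sim-notation} the vector $\vv^0$ is the $\vv$-component of $\nabla g(\hat\vx_{k-1})+\mQ\vz_{k-1}$, so it \emph{already} absorbs the $-\mA^T\vw^1$ term; your $\vv$-block should read $c\langle \vv^0-\mA^T(\tilde\vw-\vw^1)+\tfrac{2}{c}(\tilde\vv-\vv^1),\tilde\vv-\vv\rangle$ with no extra $-\mA^T\vw^1$. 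Second, setting the $\vw$-coefficient to zero is not ``the only way'' the inequality can hold for all $\vw$ (the map $\vw\mapsto c\langle p_{\vw},\tilde\vw-\vw\rangle-\tfrac12\|\tilde\vw-\vw\|^2$ is bounded above by $\tfrac{c^2}{2}\|p_{\vw}\|^2$, not $+\infty$); rather, it is a convenient \emph{choice} of $\tilde\vw$ that makes the entire $\vw$-part nonpositive, so that what remains is a sufficient condition in $\vv$ alone. With these fixes the constants work out exactly: after substitution one obtains $\langle\nabla l(\tilde\vv),\tilde\vv-\vv\rangle\le\tfrac{1}{c^2}\bigl(\|\tilde\vv-\vv^1\|^2+\|\tilde\vv-\vv\|^2\bigr)$, and since $\tfrac{2}{c^2}\|\tilde\vv-\vv^1\|\|\tilde\vv-\vv\|\le\tfrac{1}{c^2}\bigl(\|\tilde\vv-\vv^1\|^2+\|\tilde\vv-\vv\|^2\bigr)$, condition \eqref{eq:cond-refor3} is sufficient.
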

\begin{proof}
See Section \ref{sec:lem:refor}
\end{proof}
In Lemma \ref{lem:refor}, $l(\vv)$ is a quadratic function. By Assumption \ref{ass:A},  $l(\vv)$ is smooth with the smoothness constant $L_l$ and strongly convex with strong convexity parameter $\sigma_l$, where $L_l$ and $\sigma_l$
are defined as 
\begin{align}
L_l\doteq\sigma_{\max}^2+\frac{4}{c^2},\quad \sigma_l \doteq\sigma_{\min}^2+\frac{4}{c^2}.  \label{eq:L-sigma-l}
\end{align}

Let $\vv^*$ is the optimal solution of $\min_{\vv\in\gV}l(\vv).$ Then by the first order optimality condition\footnote{From the perspective of variational inequality \cite{facchinei2007finite}, the $\vv^*$ satisfying \eqref{eq:strong-v} is called the strong solution of the related variational inequality problem about $(\nabla l(\vv), \gV)$.}
 of $\vv^*$, we have: $\forall \vv\in\gV$, 
\begin{align}
\langle \nabla l(\vv^*), \vv^*-\vv\rangle \le 0. \label{eq:strong-v}
\end{align}
Comparing with \eqref{eq:strong-v}, the condition in \eqref{eq:cond-refor3} demands us to find kind of an $\epsilon$-accurate solution in terms of the first order optimality condition
with 
\begin{align}
\epsilon \doteq \frac{2}{c^2}\|\tilde{\vv} - \vv^1\|\| \tilde{\vv} - \vv\|. \label{eq:l-eps}
\end{align}

To find a $\tilde{\vv}$ satisfying \eqref{eq:cond-refor3}, in Algorithm \ref{alg:pagd}, we consider a projected accelerated gradient descent (PAGD) method. The PAGD method is a minor variant of the 
AGD \cite{nesterov1998introductory} for constrained strongly convex problems, where we assume that an efficient projection operator onto $\gV$ exists. Compared with common convex minimization methods which aim to find an $\epsilon$-solution $\vv$ in terms of the gap of objective function $l(\vv) - l(\vv^*)\le \epsilon$, the PAGD method aims to find an $\epsilon$-accurate solution in terms of the first order optimality condition in \eqref{eq:strong-v} with $\epsilon$ defined in \eqref{eq:l-eps}. This task is nontrivial because the upper bound for the gap of objective function can not be directly used, despite the convex property $l(\tilde{\vv}) - l(\vv)\le \langle \nabla l(\tilde{\vv}), \tilde{\vv} - \vv\rangle, \forall \vv\in\gV.$ To this end, in Algorithm \ref{alg:pagd}, instead of bounding the gap of objective function, we bound the residual norm $\|\vv_t-\hat{\vv}_{t-1}\|$, which in turn can be used to obtain a solution $\tilde{\vv}$ satisfying \eqref{eq:cond-refor3} (See Lemma \ref{lem:strong-dist}). 

Algorithm \ref{alg:pagd} is particularly designed for the problem of finding $\tilde{\vv}$ satisfying \eqref{eq:cond-refor3}. As a result, we initialize $\vv_0$ and $\vu_0$ as the $\vv^1$ defined in \eqref{eq:sim-notation} in the Step 3 of Algorithm \ref{alg:pagd}. Meanwhile, in the Step 9, we set the best $\vv_t$ as $\tilde{\vv}$, which has the minimal residual norm $\|\vv_t - \hat{\vv}_{t-1}\|$ among all iterates.  Then in the Step 10, we obtain $\tilde{\vw}$ from $\tilde{\vv}$ according to \eqref{eq:tilde-w-v}. Finally, according to \eqref{eq:sim-notation}, $\tilde{\vw}$ and $\tilde{\vv}$ are stacked as the $\hat{\vz}_k$, which is the desired solution for the Step 6 of the $k$-th iteration of Algorithm \ref{alg:axg}. 

All the other steps are standard steps for the well-known projected accelerated gradient descent method \cite{diakonikolas2019approximate}. In the Step 2, we set the sequences $\{b_t\}, \{B_t\}$. Then in the $t$-th iteration, we perform a convex combination of $\vv_{t-1}$ and $\vu_{t-1}$ in the Step 5, a gradient descent step to obtain $\vv_t$ in the Step 6, and a dual averaging step in the Step 7. (For a detailed explanation for these steps of accelerated first order methods, the reader may refer to \cite{diakonikolas2019approximate}).

In the Step 7 of Algorithm \ref{alg:pagd}, we define $\vu_t$ as the minimizer in $\gV$ of the following estimation sequence: 
\begin{align}
\varphi_t(\vu) \doteq &\sum_{i=1}^t b_i \big(l(\hat{\vv}_{i-1}) + \langle \nabla l(\hat{\vv}_{i-1}), \vu-\hat{\vv}_{i-1}\rangle 
+ \frac{\sigma_l}{2}\|\vu-\hat{\vv}_{i-1}\|^2\big) + \frac{1}{2}\|\vu - \vv_0\|^2, \label{eq:varphi}  
\end{align}
where $\{\hat{\vv}_{i-1}\}$ are defined in the Step 5 of the $t$ iterations. Again, $\varphi_t(\vv)$ can be recursively defined as
\begin{align}
\varphi_t(\vu) \doteq \; & \varphi_{t-1}(\vu) + b_t \big(l(\hat{\vv}_{t-1}) + \langle \nabla l(\hat{\vv}_{t-1}), \vu-\hat{\vv}_{t-1}\rangle + \frac{\sigma_l}{2}\|\vu-\hat{\vv}_{t-1}\|^2\big).
\end{align}

\begin{algorithm}[t!]
\caption{Projected Accelerated Gradient Descent}\label{alg:pagd}
\begin{algorithmic}[1]
\STATE \textbf{Input: } ,  $\vw^0, \vv^0, \vw^1, \vv^1$ defined in \eqref{eq:sim-notation}, the quadratic function $l(\vv)$ with $\vv \in\gV$ defined in \eqref{eq:cond-refor4}, $L_l, \sigma_l$ in \eqref{eq:L-sigma-l}. 
\vspace{0.02in}
\STATE $\{b_t\}, \{B_t\}$ are sequences satisfying $B_0 = 0$  and 
\begin{small}
$$
\!\!\!\!\!  B_1 = \frac{1}{2L_l},
 B_t = B_1\Big(1+\sqrt{\frac{\sigma_l}{2L_l}}\Big)^{t-1}, b_t = B_t - B_{t-1}.
$$
\end{small}
\STATE $\vv_0 = \vu_0 = \vv^1$. 
\FOR{$t = 1, 2, \ldots, T$}
\STATE $\hat{\vv}_{t-1} = \frac{B_{t-1}}{B_t}\vv_{t-1} + \frac{b_t}{B_t} \vu_{t-1}$.
\STATE $\vv_t \!=\! \argmin_{\vv\in\gV}\left\{\langle \nabla l(\hat{\vv}_{t-1}), \vv\rangle + L_l\|\vv-\hat{\vv}_{t-1}\|^2\right\}$.
\STATE  $\vu_{t} = \argmin_{\vu\in\gV}\varphi_t(\vu)$ with $\varphi_t(\vu)$ in  \eqref{eq:varphi}. 
\ENDFOR
\STATE For so computed set $\left\{ (\vv_t, \hat{\vv}_{t-1}) \right\}_{t\in [T]}$, select: $$\tilde{\vv} = \argmin_{\vv_t: t\in [T]}\|\vv_t - \hat{\vv}_{t-1}\|.$$
\STATE Let $\tilde{\vw} = \vw^1 - \frac{c}{2} (\vw^0 + \mA(\tilde{\vv}-\vv^1)).$
\STATE \textbf{return } $\hat{\vz}_k = \left[\begin{matrix}
     \tilde{\vw}\\
     \tilde{\vv} 
\end{matrix}\right]$.
\end{algorithmic}	
\end{algorithm}

We prove our result by performing estimation sequence analysis. First, an upper bound of $\varphi_t(\vu_t)$ is given below. 
\begin{lemma}\label{lem:varphi-upper}
$\forall t\ge 0$, we have 
\begin{eqnarray}
\varphi_t(\vu_t)&\le&  B_t l(\vv^*)+ \frac{1}{2}\|\vv^*-\vv_0\|^2.
\end{eqnarray}
\end{lemma}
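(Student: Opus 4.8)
\textbf{Proof proposal for Lemma \ref{lem:varphi-upper}.}

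The plan is to run the standard estimation-sequence induction on the recursively defined $\varphi_t$. The base case $t=0$ is immediate: $\varphi_0(\vu)=\tfrac12\|\vu-\vv_0\|^2$ and $B_0=0$, so $\varphi_0(\vu_0)=0\le B_0 l(\vv^*)+\tfrac12\|\vv^*-\vv_0\|^2$. For the inductive step, I would first record the two structural facts that make the argument go through. (i) Each summand $m_i(\vu)\doteq l(\hat{\vv}_{i-1})+\langle\nabla l(\hat{\vv}_{i-1}),\vu-\hat{\vv}_{i-1}\rangle+\tfrac{\sigma_l}{2}\|\vu-\hat{\vv}_{i-1}\|^2$ is a \emph{lower bound} for $l$: by $\sigma_l$-strong convexity of $l$ (Assumption \ref{ass:A} via \eqref{eq:L-sigma-l}), $m_i(\vu)\le l(\vu)$ for all $\vu\in\gV$; in particular $m_i(\vv^*)\le l(\vv^*)$. (ii) $\varphi_t$ is $1$-strongly convex in $\vu$ (the quadratic $\tfrac12\|\vu-\vv_0\|^2$ term contributes modulus $1$ and the remaining terms are convex), so $\vu_t=\argmin_{\vu\in\gV}\varphi_t(\vu)$ satisfies the strong-convexity inequality
\begin{align}
\varphi_t(\vu_t)\le \varphi_t(\vu)-\tfrac12\|\vu-\vu_t\|^2\qquad\forall\,\vu\in\gV. \nonumber
\end{align}

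With these in hand the induction is a short computation. Using the recursion $\varphi_t(\vu)=\varphi_{t-1}(\vu)+b_t m_t(\vu)$, evaluate at $\vu=\vu_{t-1}$ and apply the strong-convexity bound for $\varphi_{t-1}$ at $\vu=\vu_t$ to pass from $\varphi_{t-1}(\vu_t)$ to $\varphi_{t-1}(\vu_{t-1})$:
\begin{align}
\varphi_t(\vu_t)
&= \varphi_{t-1}(\vu_t)+b_t m_t(\vu_t) \nonumber\\
&\le \varphi_{t-1}(\vu_{t-1})-\tfrac12\|\vu_t-\vu_{t-1}\|^2+b_t m_t(\vu_t). \nonumber
\end{align}
Then invoke the induction hypothesis $\varphi_{t-1}(\vu_{t-1})\le B_{t-1}l(\vv^*)+\tfrac12\|\vv^*-\vv_0\|^2$, and on the other side bound $b_t m_t(\vu_t)$ from above. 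The cleanest way is to note that $\varphi_t$ is minimized at $\vu_t$, so $\varphi_t(\vu_t)\le \varphi_t(\vv^*)=\varphi_{t-1}(\vv^*)+b_t m_t(\vv^*)$; feeding the induction hypothesis in the form $\varphi_{t-1}(\vv^*)\le B_{t-1}l(\vv^*)+\tfrac12\|\vv^*-\vv_0\|^2$ (which follows from $\varphi_{t-1}(\vu_{t-1})\le\varphi_{t-1}(\vv^*)$ only in the wrong direction — so instead I would carry the strengthened hypothesis $\varphi_{t-1}(\vu)\ge B_{t-1} l(\vv^*) \cdot[\ldots]$, or more simply just use fact (i)) gives
\begin{align}
\varphi_t(\vu_t)\le \varphi_{t-1}(\vv^*)+b_t m_t(\vv^*)\le \big(B_{t-1}l(\vv^*)+\tfrac12\|\vv^*-\vv_0\|^2\big)+b_t l(\vv^*), \nonumber
\end{align}
and since $B_{t-1}+b_t=B_t$ this is exactly the claimed bound. (The quadratic residual term $-\tfrac12\|\vu_t-\vu_{t-1}\|^2$ from the first computation is not needed here; it is what gets used in the companion lower-bound lemma to control $\|\vv_t-\hat{\vv}_{t-1}\|$.)

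The only subtlety — and the thing to get right rather than a genuine obstacle — is the bookkeeping of which estimate is applied at which point: one wants to compare $\varphi_t(\vu_t)$ against $\varphi_t(\vv^*)$ (valid since $\vu_t$ is the minimizer and $\vv^*\in\gV$), then peel off the last summand via the recursion, then bound $m_t(\vv^*)\le l(\vv^*)$ by strong convexity of $l$, and finally close with the induction hypothesis evaluated at $\vv^*$ rather than at $\vu_{t-1}$. Since $\varphi_{t-1}(\vv^*)$ is not literally the induction hypothesis, the cleanest fix is to prove the very slightly stronger statement $\varphi_t(\vv^*)\le B_t l(\vv^*)+\tfrac12\|\vv^*-\vv_0\|^2$ by the same induction — this self-strengthens because $\varphi_t(\vv^*)=\varphi_{t-1}(\vv^*)+b_t m_t(\vv^*)\le \varphi_{t-1}(\vv^*)+b_t l(\vv^*)$ — and then observe $\varphi_t(\vu_t)\le\varphi_t(\vv^*)$. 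I expect no real difficulty; the entire argument is the textbook estimation-sequence upper bound specialized to the constrained strongly convex quadratic $l$, with the constraint handled transparently because $\vv^*\in\gV$ and $\vu_t$ is a constrained minimizer.
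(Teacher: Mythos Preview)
Your proposal is correct, and the argument you eventually settle on --- bound $\varphi_t(\vv^*)\le B_t l(\vv^*)+\tfrac12\|\vv^*-\vv_0\|^2$ via $m_i(\vv^*)\le l(\vv^*)$, then use $\varphi_t(\vu_t)\le\varphi_t(\vv^*)$ --- is exactly the paper's proof. The paper, however, dispenses with the induction entirely: since by strong convexity $m_i(\vu)\le l(\vu)$ for \emph{every} $\vu\in\gV$, one has directly from the definition $\varphi_t(\vu)=\sum_{i=1}^t b_i m_i(\vu)+\tfrac12\|\vu-\vv_0\|^2\le B_t l(\vu)+\tfrac12\|\vu-\vv_0\|^2$, and evaluating at $\vu=\vv^*$ together with $\varphi_t(\vu_t)=\min_{\vu\in\gV}\varphi_t(\vu)$ finishes in one line; your first two paragraphs (the induction on $\varphi_t(\vu_t)$, the $1$-strong-convexity residual, the attempt to bound $b_t m_t(\vu_t)$) are all unnecessary detours.
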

\begin{proof}
See Section \ref{sec:lem:varphi-upper}. 
\end{proof}

Then we give a lower bound of  $\varphi_t(\vu_t)$ in Lemma \ref{lem:varphi-lower}.
\begin{lemma}\label{lem:varphi-lower}
$\forall t\ge 0$, we have 
\begin{eqnarray}
B_t l(\vv_t) + \sum_{i=1}^t \frac{L_l B_i}{2} \|{\vv}_i - \hat{\vv}_{i-1}\|^2 &\le& \varphi_t(\vu_t). 
\end{eqnarray}
\end{lemma}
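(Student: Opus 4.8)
The plan is to prove Lemma \ref{lem:varphi-lower} by induction on $t$, mirroring the standard estimation-sequence argument for accelerated gradient descent adapted to the constrained strongly convex setting. First I would check the base case $t=0$: since $B_0=0$, the claimed inequality reduces to $0\le \varphi_0(\vu_0)=\frac12\|\vu_0-\vv_0\|^2=0$, which holds with equality. For the inductive step, I would assume the bound holds at $t-1$ and analyze $\varphi_t(\vu_t)$ using the recursive definition $\varphi_t(\vu)=\varphi_{t-1}(\vu)+b_t\big(l(\hat\vv_{t-1})+\langle\nabla l(\hat\vv_{t-1}),\vu-\hat\vv_{t-1}\rangle+\frac{\sigma_l}2\|\vu-\hat\vv_{t-1}\|^2\big)$.

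The key mechanics are as follows. Since $\varphi_{t-1}$ is $1$-strongly convex (the sum of the $b_i\frac{\sigma_l}2\|\cdot\|^2$ terms plus $\frac12\|\cdot-\vv_0\|^2$ gives modulus at least $1+\sigma_l B_{t-1}$; the crucial point is the quadratic term) and $\vu_{t-1}$ minimizes it over $\gV$, I would use the strong-convexity lower bound $\varphi_{t-1}(\vu)\ge \varphi_{t-1}(\vu_{t-1})+\frac{1+\sigma_l B_{t-1}}{2}\|\vu-\vu_{t-1}\|^2$ for all $\vu\in\gV$ (valid because of the first-order optimality condition $\langle\nabla\varphi_{t-1}(\vu_{t-1}),\vu-\vu_{t-1}\rangle\ge0$). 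Combining this with the new term, $\varphi_t(\vu)$ is itself strongly convex with modulus $1+\sigma_l B_t$ and its minimizer is $\vu_t$; evaluating the resulting quadratic lower bound at $\vu=\hat\vv_{t-1}$ and invoking the induction hypothesis for $\varphi_{t-1}(\vu_{t-1})$ together with convexity-smoothness of $l$ (so that $l(\hat\vv_{t-1})\ge l(\vv_{t-1})+\langle\nabla l(\hat\vv_{t-1}),\hat\vv_{t-1}-\vv_{t-1}\rangle$, or rather the appropriate descent-lemma consequence of the Step 6 update producing $\vv_t$) I would bound $\varphi_t(\vu_t)$ from below by $B_t l(\vv_t)+\sum_{i=1}^{t}\frac{L_l B_i}{2}\|\vv_i-\hat\vv_{i-1}\|^2$. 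The specific algebra requires using the Step 5 identity $\hat\vv_{t-1}=\frac{B_{t-1}}{B_t}\vv_{t-1}+\frac{b_t}{B_t}\vu_{t-1}$ to convert the weighted combination $B_{t-1}l(\vv_{t-1})+b_t\big(l(\hat\vv_{t-1})+\langle\nabla l(\hat\vv_{t-1}),\cdot\rangle\big)$ into $B_t l(\hat\vv_{t-1})$ plus cross terms, and then the Step 6 gradient-descent inequality $l(\vv_t)\le l(\hat\vv_{t-1})+\langle\nabla l(\hat\vv_{t-1}),\vv_t-\hat\vv_{t-1}\rangle+L_l\|\vv_t-\hat\vv_{t-1}\|^2$ (with the minimizer property of $\vv_t$) to replace $l(\hat\vv_{t-1})$ by $l(\vv_t)$ at the cost of an $L_l\|\vv_t-\hat\vv_{t-1}\|^2$ term, which is exactly what accumulates into the sum. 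The choice $B_1=\frac1{2L_l}$, $b_t/B_t$ controlled by $\sqrt{\sigma_l/(2L_l)}$ is what makes the leftover quadratic terms have a nonnegative coefficient so they can be discarded or absorbed.

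The step I expect to be the main obstacle is bookkeeping the quadratic remainder terms: one has to show that after using the strong-convexity bound on $\varphi_{t-1}$, the $\frac{\sigma_l}2\|\cdot-\hat\vv_{t-1}\|^2$ term from the new summand, and the descent-lemma term $L_l\|\vv_t-\hat\vv_{t-1}\|^2$, the net coefficient multiplying $\|\vu_t-\hat\vv_{t-1}\|^2$ (or the cross term between $\vu_t-\vu_{t-1}$ and $\hat\vv_{t-1}-\vu_{t-1}$) is exactly cancelled or nonnegative by the chosen $\{b_t\},\{B_t\}$ — this is where the precise recursion $B_t=B_1(1+\sqrt{\sigma_l/(2L_l)})^{t-1}$ enters and must be verified to satisfy the requisite relation (typically $b_t^2 L_l = B_t(1+\sigma_l B_{t-1})$ or an analogue). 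I would isolate this as a short algebraic sublemma about $\{b_t\},\{B_t\}$ and then feed it into the induction. Everything else — the strong-convexity inequalities, the optimality conditions, the telescoping of the $l$ values — is routine once the sequence identity is in hand.
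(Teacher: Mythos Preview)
Your proposal is essentially the paper's proof: telescoping via the strong-convexity lower bound $\varphi_{i-1}(\vu)\ge\varphi_{i-1}(\vu_{i-1})+\tfrac{1+\sigma_l B_{i-1}}{2}\|\vu-\vu_{i-1}\|^2$, the Step~5 coupling, the Step~6 descent inequality, and convexity of $l$. Two small tactical corrections are worth flagging.

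First, you should evaluate the lower bound at $\vu=\vu_t$, not $\vu=\hat\vv_{t-1}$: plugging $\hat\vv_{t-1}$ into a lower bound for $\varphi_t(\cdot)$ only controls $\varphi_t(\hat\vv_{t-1})\ge\varphi_t(\vu_t)$, which is the wrong direction. The paper sets $\vu=\vu_t$ and introduces the auxiliary point $\tilde\vv_t\doteq\tfrac{B_{t-1}}{B_t}\vv_{t-1}+\tfrac{b_t}{B_t}\vu_t\in\gV$, so that $\tilde\vv_t-\hat\vv_{t-1}=\tfrac{b_t}{B_t}(\vu_t-\vu_{t-1})$; it then uses the Step~6 minimizer property of $\vv_t$ to replace $\tilde\vv_t$ by $\vv_t$ inside $\langle\nabla l(\hat\vv_{t-1}),\cdot-\hat\vv_{t-1}\rangle+L_l\|\cdot-\hat\vv_{t-1}\|^2$, which is exactly how the $\tfrac{L_l B_t}{2}\|\vv_t-\hat\vv_{t-1}\|^2$ term is produced. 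Second, the precise sequence inequality is $2L_l\,b_t^2\le(1+\sigma_l B_{t-1})B_t$ (a factor $2$, and an inequality rather than equality), which the chosen $B_t=\tfrac{1}{2L_l}\bigl(1+\sqrt{\sigma_l/(2L_l)}\bigr)^{t-1}$ is designed to satisfy.
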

\begin{proof}
See Section \ref{sec:lem:varphi-lower}. 
\end{proof}

By Lemmas \ref{lem:dis-upper}, \ref{lem:dis-lower} and the fact $l(\vv_t)\ge l(\vv^*),$ we can give an upper bound for $\min_{i\in[t]}\|\vv_i - \hat{\vv}_{i-1}\|.$ To provide a bound for $\langle \nabla l(\vv_i), \vv_i - \vv\rangle$, according to the optimal condition of $\vv_i$ in the Step 6 of the $i$-th iteration of Algorithm \ref{alg:pagd} and the optimal condition of $\vv^*$ for the problem $\min_{\vv\in\gV}l(\vv),$ we have Lemma \ref{lem:strong-dist}.
\begin{lemma}\label{lem:strong-dist}
$\forall i\ge 1, $ we have $\vv \in \gV,$
\vspace{-1mm}
\begin{eqnarray}
\langle \nabla l(\vv_i), \vv_i - \vv\rangle &\le& 3L_l\|\vv_i - \hat{\vv}_{i-1}\|\|\vv_i - \vv\|,\nonumber\\
\|\vv_i-\vv^*\| &\le& \frac{3 L_l}{\sigma_l}\|\vv_i - \hat{\vv}_{i-1}\|.
\vspace{-2mm}
\end{eqnarray}
\end{lemma}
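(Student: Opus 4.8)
The plan is to derive both inequalities directly from the first-order optimality condition of the projected gradient step in Step 6 of Algorithm \ref{alg:pagd}, using only the $L_l$-smoothness and $\sigma_l$-strong convexity of the quadratic $l(\vv)$ recorded in \eqref{eq:L-sigma-l}. No recursion or induction over $t$ is needed: each bound is a pointwise consequence of a single prox step.

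First I would write down the variational characterization of $\vv_i = \argmin_{\vv\in\gV}\{\langle \nabla l(\hat{\vv}_{i-1}),\vv\rangle + L_l\|\vv-\hat{\vv}_{i-1}\|^2\}$. Since the objective is convex and $\gV$ is closed convex, its minimizer satisfies
\[
\big\langle \nabla l(\hat{\vv}_{i-1}) + 2L_l(\vv_i-\hat{\vv}_{i-1}),\ \vv_i-\vv\big\rangle \le 0,\qquad \forall\, \vv\in\gV,
\]
where the factor $2L_l$ comes from differentiating $L_l\|\vv-\hat{\vv}_{i-1}\|^2$. Then I decompose $\nabla l(\vv_i) = \big[\nabla l(\hat{\vv}_{i-1}) + 2L_l(\vv_i-\hat{\vv}_{i-1})\big] + \big[\nabla l(\vv_i)-\nabla l(\hat{\vv}_{i-1}) - 2L_l(\vv_i-\hat{\vv}_{i-1})\big]$. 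Pairing against $\vv_i-\vv$, the first bracket contributes a nonpositive term by the optimality condition, while the second is handled by Cauchy--Schwarz together with the triangle inequality
\[
\big\|\nabla l(\vv_i)-\nabla l(\hat{\vv}_{i-1})-2L_l(\vv_i-\hat{\vv}_{i-1})\big\| \le L_l\|\vv_i-\hat{\vv}_{i-1}\| + 2L_l\|\vv_i-\hat{\vv}_{i-1}\| = 3L_l\|\vv_i-\hat{\vv}_{i-1}\|,
\]
using $L_l$-Lipschitzness of $\nabla l$. This yields the first inequality $\langle \nabla l(\vv_i),\vv_i-\vv\rangle\le 3L_l\|\vv_i-\hat{\vv}_{i-1}\|\|\vv_i-\vv\|$.

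For the second inequality I would combine strong monotonicity of $\nabla l$, namely $\langle \nabla l(\vv_i)-\nabla l(\vv^*),\vv_i-\vv^*\rangle \ge \sigma_l\|\vv_i-\vv^*\|^2$ (which follows from $\sigma_l$-strong convexity of $l$), with the first-order optimality of $\vv^*$ in \eqref{eq:strong-v}: since $\vv_i\in\gV$, we have $\langle \nabla l(\vv^*),\vv_i-\vv^*\rangle\ge 0$. Adding the two gives $\sigma_l\|\vv_i-\vv^*\|^2\le\langle \nabla l(\vv_i),\vv_i-\vv^*\rangle$. Specializing the first inequality to $\vv=\vv^*$ bounds the right-hand side by $3L_l\|\vv_i-\hat{\vv}_{i-1}\|\|\vv_i-\vv^*\|$; dividing through by $\|\vv_i-\vv^*\|$ (the case $\vv_i=\vv^*$ being trivial) produces $\|\vv_i-\vv^*\|\le \tfrac{3L_l}{\sigma_l}\|\vv_i-\hat{\vv}_{i-1}\|$.

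The argument is essentially bookkeeping, and the only place requiring care is matching the factor $2L_l$ from the prox term against the factor $L_l$ from smoothness so that the constant comes out exactly $3L_l$ (hence $3L_l/\sigma_l$) and not larger; getting this right is what makes the residual norm $\|\vv_i-\hat{\vv}_{i-1}\|$ an adequate surrogate for the target accuracy \eqref{eq:cond-refor3} once the outer calibration \eqref{eq:l-eps} is invoked.
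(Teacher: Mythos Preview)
Your proposal is correct and follows essentially the same approach as the paper: both use the optimality condition of the projected gradient step to get $\langle \nabla l(\hat{\vv}_{i-1}) + 2L_l(\vv_i-\hat{\vv}_{i-1}),\vv_i-\vv\rangle\le 0$, then add and subtract this term and bound the remainder via Cauchy--Schwarz and $L_l$-smoothness to obtain the constant $3L_l$; the second inequality is likewise derived in both by combining strong monotonicity with the optimality of $\vv^*$ and specializing the first inequality to $\vv=\vv^*$. The only cosmetic difference is that the paper applies Cauchy--Schwarz to the two pieces $\nabla l(\vv_i)-\nabla l(\hat{\vv}_{i-1})$ and $2L_l(\vv_i-\hat{\vv}_{i-1})$ separately, whereas you first combine them via the triangle inequality on the norm.
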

\begin{proof}
See Section \ref{sec:lem:strong-dist}. 
\end{proof}

Combining Lemmas \ref{lem:varphi-upper}, \ref{lem:varphi-lower} and \ref{lem:strong-dist}, and using the definition of $L_l, \sigma_l$ in \eqref{eq:L-sigma-l}, $c$ in \eqref{eq:sim-notation2} and $\hat{\vz}_k$ in \eqref{eq:sim-notation}, we have Theorem \ref{thm:subprob}.   
\begin{theorem}[Iteration Complexity of Algorithm \ref{alg:pagd}]\label{thm:subprob}
With at most
\vspace{-2mm}
\begin{align}
t\ge \frac{\log\Big(1\!+\!9\sqrt{2}L_l^{\frac{3}{2}}\sigma_l^{-\frac{3}{2}}(\sigma_l c^2 \!+\! 1)^2  \Big)}{\log\Big(1+\sqrt{\frac{\sigma_l}{{2}L_l}}\Big)} 
= \tilde{O}\Big(\frac{\sigma_{\max}}{\sigma_{\min}}\log k \Big),\nonumber
\vspace{-2mm}
\end{align}
iterations, where $k$ denotes the $k$-th iteration of Algorithm \ref{alg:axg}, and $\tilde{O}$ hides some log factors about $\sigma_{\min}, \sigma_{\max}$ and $L$,  Algorithm \ref{alg:pagd} returns $\hat{\vz}_k$ such that the condition \eqref{eq:cond} holds.
\end{theorem}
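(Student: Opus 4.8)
The plan is to run the estimation-sequence telescoping for Algorithm~\ref{alg:pagd}, turn it into a geometric-decay bound on the smallest residual norm $\min_{i\in[t]}\|\vv_i-\hat{\vv}_{i-1}\|$ produced in $t$ inner iterations, and then convert that residual bound into the first-order optimality-gap condition \eqref{eq:cond-refor3}. By Lemma~\ref{lem:refor}, once $\tilde{\vv}$ satisfies \eqref{eq:cond-refor3} and $\tilde{\vw}$ is set by \eqref{eq:tilde-w-v} (exactly Step~10 of Algorithm~\ref{alg:pagd}), the stacked vector $\hat{\vz}_k=(\tilde{\vw},\tilde{\vv})$ satisfies the target condition \eqref{eq:cond}; so it remains only to count how many inner iterations make \eqref{eq:cond-refor3} hold.

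\emph{Step 1 (residual decay).} Chaining Lemma~\ref{lem:varphi-lower} and Lemma~\ref{lem:varphi-upper}, for every $t$,
\[
B_t l(\vv_t) + \sum_{i=1}^t \tfrac{L_l B_i}{2}\|\vv_i-\hat{\vv}_{i-1}\|^2 \;\le\; \varphi_t(\vu_t) \;\le\; B_t l(\vv^*) + \tfrac12\|\vv^*-\vv_0\|^2 .
\]
Using $l(\vv_t)\ge l(\vv^*)$, $\vv_0=\vv^1$, and cancelling $B_t l(\vv^*)$ gives $\sum_{i=1}^t B_i\|\vv_i-\hat{\vv}_{i-1}\|^2\le L_l^{-1}\|\vv^*-\vv^1\|^2$; bounding the left side below by $\bigl(\min_{i\in[t]}\|\vv_i-\hat{\vv}_{i-1}\|^2\bigr)\sum_{i=1}^t B_i$ and recalling that Step~9 selects $\tilde{\vv}$ as the iterate of minimal residual, we get
\[
\|\tilde{\vv}-\hat{\vv}_{t^\star-1}\|^2 \;\le\; \frac{\|\vv^*-\vv^1\|^2}{L_l\sum_{i=1}^t B_i},
\]
where $t^\star\in[t]$ is the selected index. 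Since $\{B_i\}$ is geometric with ratio $q=1+\sqrt{\sigma_l/(2L_l)}$ and $B_t=\tfrac1{2L_l}q^{t-1}$, the closed form $\sum_{i=1}^t B_i=B_t\tfrac{q-q^{1-t}}{q-1}$ yields $\sum_{i=1}^t B_i\gtrsim B_t\sqrt{L_l/\sigma_l}$, so $\|\tilde{\vv}-\hat{\vv}_{t^\star-1}\|$ decays like $q^{-(t-1)/2}$, up to a $(\sigma_l/L_l)^{1/4}$ factor.

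\emph{Step 2 (residual $\Rightarrow$ optimality gap and iteration count).} Set $r\doteq\|\tilde{\vv}-\hat{\vv}_{t^\star-1}\|$. The first inequality of Lemma~\ref{lem:strong-dist} gives $\langle\nabla l(\tilde{\vv}),\tilde{\vv}-\vv\rangle\le 3L_l r\|\tilde{\vv}-\vv\|$ for all $\vv\in\gV$, so \eqref{eq:cond-refor3} holds as soon as $3L_l r\le\tfrac{2}{c^2}\|\tilde{\vv}-\vv^1\|$. For the right-hand side, the second inequality of Lemma~\ref{lem:strong-dist} and the triangle inequality give $\|\tilde{\vv}-\vv^1\|\ge\|\vv^*-\vv^1\|-\|\tilde{\vv}-\vv^*\|\ge\bigl(1-\tfrac{3L_l}{\sigma_l}\cdot\tfrac{r}{\|\vv^*-\vv^1\|}\bigr)\|\vv^*-\vv^1\|$; once $t$ is large enough that the Step~1 bound forces $\tfrac{3L_l}{\sigma_l}\cdot\tfrac{r}{\|\vv^*-\vv^1\|}\le\tfrac12$, we obtain $\|\tilde{\vv}-\vv^1\|\ge\tfrac12\|\vv^*-\vv^1\|$, and then plugging the Step~1 residual bound into $3L_l r\le\tfrac1{c^2}\|\vv^*-\vv^1\|$ gives \eqref{eq:cond-refor3}. (If $\vv^1=\vv^*$ then $r=0$, $\tilde{\vv}=\vv^1=\vv^*$, and \eqref{eq:cond-refor3} is trivial since its left side is $\le0$ by optimality of $\vv^*$.) After using $\sum_{i=1}^t B_i\gtrsim B_t\sqrt{L_l/\sigma_l}$, both smallness requirements on $r$ read $q^{t-1}\gtrsim(L_l/\sigma_l)^{3/2}$ and $q^{t-1}\gtrsim L_l^{3/2}\sigma_l^{1/2}c^4$; combining them and tracking constants gives the displayed threshold $q^t\ge 1+9\sqrt2\,L_l^{3/2}\sigma_l^{-3/2}(\sigma_l c^2+1)^2$. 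Finally, substituting $c=(2k-1)/L$, $L_l=\sigma_{\max}^2+4/c^2$ and $\sigma_l=\sigma_{\min}^2+4/c^2$ from \eqref{eq:L-sigma-l} shows the numerator of the threshold is $O\bigl(\log(\sigma_{\max}/\sigma_{\min})+\log\sigma_{\min}+\log(k/L)\bigr)$ while $\log q\gtrsim\sqrt{\sigma_l/(2L_l)}\gtrsim\sigma_{\min}/\sigma_{\max}$, hence $t=\tilde{O}\bigl(\tfrac{\sigma_{\max}}{\sigma_{\min}}\log k\bigr)$; by Lemma~\ref{lem:refor} and Step~10, this many inner iterations suffice for $\hat{\vz}_k$ to satisfy \eqref{eq:cond}.

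The main obstacle is Step~2: the tolerance $\epsilon$ in \eqref{eq:l-eps} is itself proportional to the \emph{unknown} quantity $\|\tilde{\vv}-\vv^1\|$, so a textbook ``function-gap $\le\epsilon$'' accelerated-gradient guarantee is not directly usable; one has to bootstrap a lower bound on $\|\tilde{\vv}-\vv^1\|$ out of the residual norm via the distance estimate $\|\vv_i-\vv^*\|\le\tfrac{3L_l}{\sigma_l}\|\vv_i-\hat{\vv}_{i-1}\|$ of Lemma~\ref{lem:strong-dist}, and it is exactly this bootstrap that injects the extra $(L_l/\sigma_l)^{3/2}$ (equivalently $(\sigma_{\max}/\sigma_{\min})^3$) factor into the logarithm.
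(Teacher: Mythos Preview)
Your proof is correct and follows essentially the same route as the paper: combine Lemmas~\ref{lem:varphi-upper} and~\ref{lem:varphi-lower} to bound the minimal residual, bootstrap via Lemma~\ref{lem:strong-dist} and the triangle inequality to relate $\|\tilde{\vv}-\vv^1\|$ to $\|\vv^*-\vv^1\|$, and then solve for $t$ using the closed-form geometric sum $\sum_{i=1}^t B_i=(2L_l\sigma_l)^{-1/2}(q^t-1)$. The only cosmetic difference is that the paper keeps the factor $\bigl(1-\tfrac{3\sqrt{L_l}}{\sigma_l\sqrt{\sum B_i}}\bigr)^{-1}$ intact rather than first pinning it at $2$ as you do; this is why their single sufficient condition $\sum_{i=1}^t B_i\ge 9L_l(\sigma_l c^2+1)^2/\sigma_l^2$ produces exactly the constant $9\sqrt{2}$ in the displayed threshold, whereas your two-condition split yields the same order but not literally that constant.
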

\begin{proof}
See Section \ref{sec:thm:subprob}. 
\end{proof}

From Theorem \ref{thm:axg} and Theorem \ref{thm:subprob}, we have Corollary \ref{coro:axg2}, which is the main result of this paper.
\begin{corollary}\label{coro:axg2}
With Algorithm \ref{alg:pagd} as the subsolver, Algorithm \ref{alg:axg} will return an $\epsilon$-accurate solution that satisfies \eqref{eq:appro-weak} with at most $\tilde{O}\Big(\frac{\sigma_{\max}}{\sigma_{\min}}\sqrt{\frac{L}{\epsilon}} \log \frac{1}{\epsilon}\Big)$ number of accessing the extended first order oracle in \eqref{eq:extend-oracle-2}, where $\tilde{O}$ hides certain logarithmic factors about $\sigma_{\min}, \sigma_{\max}$ and $L.$
\end{corollary}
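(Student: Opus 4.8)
The plan is to compose the two complexity bounds already established: Theorem~\ref{thm:axg} controls the number of \emph{outer} iterations of Algorithm~\ref{alg:axg}, Theorem~\ref{thm:subprob} controls the number of \emph{inner} iterations of Algorithm~\ref{alg:pagd} spent realizing Step~6 of a single outer iteration, and summing the inner counts over all outer iterations gives the total number of oracle calls. For the outer count, I would invoke Theorem~\ref{thm:axg}: after $K$ iterations $\vx_K$ satisfies $g(\vx_K)-g(\vy)+\langle\mQ\vy,\vx_K-\vy\rangle\le 4L\|\vy-\vx_0\|^2/K^2$ for all $\vy\in\gX$. Fixing the bounded region of interest $\gB\subseteq\gX$ over which \eqref{eq:appro-weak} is to be certified (say one containing a saddle point of \eqref{eq:con-con-prob}, with $D_\gB\doteq\sup_{\vy\in\gB}\|\vy-\vx_0\|$), it suffices to take $K=\big\lceil 2\sqrt{L}\,D_\gB/\sqrt{\epsilon}\,\big\rceil=O(\sqrt{L/\epsilon})$ to guarantee \eqref{eq:appro-weak}.

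For the inner count, fix $k\in[K]$. By Lemma~\ref{lem:refor} the subproblem \eqref{eq:cond} of outer iteration $k$ is equivalent to finding $\tilde\vv\in\gV$ satisfying \eqref{eq:cond-refor3} for the quadratic $l(\cdot)$ of \eqref{eq:cond-refor4}, whose condition number is $L_l/\sigma_l=(\sigma_{\max}^2+4/c^2)/(\sigma_{\min}^2+4/c^2)\le\sigma_{\max}^2/\sigma_{\min}^2$ \emph{uniformly in} $c=a_k=(2k-1)/L$, since $t\mapsto(\sigma_{\max}^2+t)/(\sigma_{\min}^2+t)$ is nonincreasing for $t\ge 0$. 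Theorem~\ref{thm:subprob} then guarantees that Algorithm~\ref{alg:pagd} outputs such a $\tilde\vv$, hence $\hat\vz_k$ via \eqref{eq:tilde-w-v}, in $t_k=\tilde O\big(\tfrac{\sigma_{\max}}{\sigma_{\min}}\log k\big)$ iterations ($\tilde O$ hiding $\log$ factors in $\sigma_{\min},\sigma_{\max},L$), and each inner iteration uses $O(1)$ calls to the extended oracle \eqref{eq:extend-oracle-2}: forming $\nabla l(\hat\vv_{t-1})$ needs one product with $\mA^T\mA$, while the constants $\vw^0,\vv^0,\vw^1$ defining $l$ are assembled once per outer iteration from $\nabla g(\hat\vx_{k-1})$, $\mA^T\vw$ and $\mA\vv$, plus $O(d+n)$ arithmetic and one projection onto $\gV$. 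Summing, the total is $\sum_{k=1}^{K}O(t_k)=\tilde O\big(\tfrac{\sigma_{\max}}{\sigma_{\min}}\big)\sum_{k=1}^{K}\log k=\tilde O\big(\tfrac{\sigma_{\max}}{\sigma_{\min}}K\log K\big)$; substituting $K=O(\sqrt{L/\epsilon})$ and absorbing $\log K=O(\log(L/\epsilon))$ into $\tilde O$ yields $\tilde O\big(\tfrac{\sigma_{\max}}{\sigma_{\min}}\sqrt{L/\epsilon}\,\log\tfrac1\epsilon\big)$, as claimed.

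The counting above is routine once Theorems~\ref{thm:axg} and~\ref{thm:subprob} are in hand, so the substance lives in those results and the reduction of Lemma~\ref{lem:refor}; for the corollary itself the one structural point worth emphasizing is that the dependence of $c=a_k$ on the outer index $k$ does \emph{not} leak into the inner complexity beyond a harmless $\log k$ factor, precisely because $\mA^T\mA\succeq\sigma_{\min}^2\mI$ (Assumption~\ref{ass:A}) keeps the subproblem condition number bounded by $\sigma_{\max}^2/\sigma_{\min}^2$ for \emph{every} $k$, giving a $k$-independent linear rate for the inner loop. Without Assumption~\ref{ass:A} this uniform bound collapses, the inner loop is no longer linearly convergent, and one is thrown back to the $O(1/\epsilon)$ regime — which is exactly the gap with the lower bound of \cite{ouyang2018lower}, whose oracle \eqref{eq:oracle-2} cannot see the $\mA^T\mA$-induced strong convexity exposed by \eqref{eq:extend-oracle-2}. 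A secondary subtlety, already absorbed into Theorem~\ref{thm:subprob}, is that \eqref{eq:cond} is a first-order-optimality \emph{residual} requirement rather than a function-value gap, which is why Algorithm~\ref{alg:pagd} must be driven by the residual norms $\|\vv_t-\hat\vv_{t-1}\|$ (Lemma~\ref{lem:strong-dist}) instead of the usual accelerated-gradient objective bound; I would therefore treat the interface between \eqref{eq:cond}, \eqref{eq:cond-refor3} and the output of Algorithm~\ref{alg:pagd} as the place where care is genuinely needed, with the remaining assembly being arithmetic.
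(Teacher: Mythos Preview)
Your proposal is correct and follows essentially the same route as the paper: the paper's own proof is a one-line ``combine Theorems~\ref{thm:axg} and~\ref{thm:subprob}'', and your argument is precisely an explicit unpacking of that combination, multiplying the outer count $K=O(\sqrt{L/\epsilon})$ from Theorem~\ref{thm:axg} by the per-iteration inner count $\tilde O(\tfrac{\sigma_{\max}}{\sigma_{\min}}\log k)$ from Theorem~\ref{thm:subprob} and summing. The additional observations you make (the uniform condition-number bound $L_l/\sigma_l\le\sigma_{\max}^2/\sigma_{\min}^2$, the necessity of Assumption~\ref{ass:A}, and the residual-versus-function-value subtlety) are accurate and helpful elaboration, but go beyond what the paper itself records for this corollary.
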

\begin{proof}
Simply combine the results of Theorem \ref{thm:axg} and Theorem \ref{thm:subprob}, with the extended oracle \eqref{eq:extend-oracle-2}. 
\end{proof}

\begin{remark}
Compared with the BAXG method which uses the approximate backward Euler discretization, the PAGD method in Algorithm \ref{alg:pagd} uses the forward Euler discretization. Thus, in each iteration of PAGD, only one gradient $\nabla l(\hat{\vv}_{t-1})$ needs to be computed. 
\end{remark}

\section{Problems with Linear System Structure}
In our discussion above, our convergence results are given for the general problem class in \eqref{eq:con-con-prob}. In practice,  many problems in this class may have additional  structures, which may enable us to obtain even better convergence guarantees. One subclass of  the problem class  \eqref{eq:con-con-prob} is that we restrict \eqref{eq:con-con-prob}  with $\gV = \bbR^n$. As we have seen in Section \ref{eq:moti-exam}, one example is the linear equality constrained smooth optimization problem in \eqref{eq:smooth-linear} . Another example is the saddle point formulation of the empirical mean squared projected Bellman error in reinforcement learning \cite{du2017stochastic}. When $\gV = \bbR^n$, the associated subproblem in Section \ref{sec:subprob}  will be a strongly convex quadratic minimization problem on $\bbR^n$, which in turns is a problem of solving a linear system as follows 
\begin{align}
\Big(\mA^T\mA + \frac{4}{c^2}\mI\Big)\vs = -\Big( \frac{2}{c}\vv^0 + \mA^T\vw^0\Big), \label{eq:linear-sys}
\vspace{-3mm}
\end{align}
where $\vs = \vv - \vv^1$ and  the other notations are the same as in Section \ref{sec:subprob}. 

Besides convex optimization tools such as the AGD method, we can also use the Krylov subspace methods that are designed specifically for solving a linear system, by the well-known conjugate gradient (CG) descent \cite{saad2003iterative}. Theoretically CG has the same  worst time complexity with AGD, but it can be much faster than its worst case in practice, due to the so called ``superlinear convergence behavior'' \cite{beckermann2001superlinear}. 

Meanwhile, in all the subproblems of the Step 6 in Algorithm \ref{alg:axg}, the corresponding linear system problems in \eqref{eq:linear-sys} differ by only a scaled identity matrix and the right hand side. This suggests that if it is not so expensive,
we can compute an eigenvalue decomposition of $\mA^T\mA$ beforehand with  typically $O(n^3)$ cost. Then all the subproblems of the Step 6 in Algorithm \ref{alg:axg} can be solved with cost $O(n^2)$. That is, if the number of iterations of Algorithm \ref{alg:axg} is $\Omega(n),$ then the amortized complexity of solving each subproblem exactly is $O(n^2 + nd) = O(nd)$ due to  $n\le d$ in our setting -- the same cost as accessing the oracle \eqref{eq:oracle-2}. As a result, for this subclass of problem \eqref{eq:con-con-prob} with $\gV =\bbR^n$, we can essentially find an $\epsilon$-accurate solution satisfying \eqref{eq:appro-weak} with (equivalent) $O(1/\sqrt{\epsilon})$ number of accessing the extended first order oracle in \eqref{eq:oracle-2}. 

When $n$ is large, it will be expensive to pre-compute the eigenvalue decomposition of $\mA^T\mA$. In this case, one can perform partial eigenvalue decomposition of $\mA^T\mA$ along the iterations of Algorithm \ref{alg:axg} by the Lanczos method for linear systems only with different scalar shifts and right hand sides \cite{saad1987lanczos, soodhalter2014krylov}. This normally leads to complexity results not worse than  pre-computing the eigenvalue decomposition of $\mA^T\mA$.

\section{Experiments}\label{sec:experiments}
In this section, we provide some preliminary experiments to verify the effectiveness of the proposed BAXG algorithm. We test the performance of BAXG with Algorihm \ref{alg:pagd} as the subsolver on the two motivating examples of Section \ref{eq:moti-exam}: the linear equality constrained smoothed-$\ell_1$ norm optimization problem in \eqref{eq:smooth-linear} and \eqref{eq:R}, and the dense error correction problem in \eqref{eq:des-ori}. For comparison, we also implement the extragradient (EG) method \cite{korpelevich1976extragradient}, the stochastic accelerated mirror prox (SAMP) method \cite{chen2017accelerated} for the two problems respectively. Both the EG and SAMP methods are assumed to access the first order oracle\footnote{We assume the SAMP method can access exact gradients.} in \eqref{eq:oracle-2}, while the BAXG method is assumed to access the extended first order oracle in \eqref{eq:extend-oracle-2}.

\subsection{Linear Equality Constrained Smoothed $\ell_1$-Norm Minimization}\label{sub:smooth-l1}
Using Lagrange formulation, we have 
\begin{align}
&\min_{\vw \in\bbR^d } R(\vw) \quad s.t. \quad  \mA^T\vw = \vb \nonumber\\
=&\min_{\vw\in\bbR^d} \max_{\v\in \bbR^n} \lambda R(\vw) + \langle \vw,\mA\vv\rangle - \vb^T\vv,  \label{eq:LC} 
\vspace{-1mm}
\end{align}
where 
$
R(\vw) \doteq \sum_{i=1}^d\frac{1}{a}(\log (1+\exp(a w_i) + \log (1+\exp(-a w_i))$
with $a=10^6$, $\lambda>0$ is the parameter to balance the objective function $R(\vw)$ and the constraint $\mA^T\vw = \vb.$  
In our experiments, we set 
 $\mA\in\bbR^{d \times n}$ with $d = 1,000$ and $n=500$ as a random Gaussian matrix with \emph{i.i.d.} elements from $\mathcal{N}(0,1/\sqrt{n}),$ $\vb = \mA^T\vw^*$ with $\vw^*$ is a sparse vector with random nonzero elements from $\{1,-1\}$, and $\lambda\in\{10^{-6}, 10^{-4}, 10^{-2}\}.$
Although we can estimate the global Lipschitz constant of $\lambda R(\vw)$, it is too pessimistic for practical use\footnote{With $a=10^6$ and $\lambda>0$, the global Lipschitz constant of $\lambda R(\vw)$ will be $10^6\lambda$. \cite{schmidt2007fast}}. In our experiments, we search the best Lipschitz constant of $\lambda R(\vw)$  in $\{10^{-5}, 10^{-4}, 10^{-3}, 10^{-2}, 10^{-1}, 1, 10, 10^2, 10^3\}$.

As both oracle models in \eqref{eq:oracle-2} and \eqref{eq:extend-oracle-2} have similar computational cost, we use the number of accessing (extended) first order oracle as the $x$-axis, while we use the value $\lambda|R(\vw) - R(\vw^*)| + \|\mA^T\vw - b\|$ to measure the progress of these algorithms.
In Figure \ref{fig:1}, we show the comparision of these algorithms for the three settings $\lambda \in \{10^{-6}, 10^{-4}, 10^{-2}\}.$ For the two cases $\lambda\in\{10^{-6}, 10^{-4}\}$ where the Lipschitz constant of $\lambda R(\vw)$ is not so large, because of the accelerated rate $\tilde{O}(\frac{\sigma_{\max}}{\sigma_{\min}} \sqrt{L/{\epsilon}}\log \frac{1}{\epsilon})$, the BAXG method will overtake the EG and SAMP methods after proper iterations. On the other hand, when $\lambda = 10^{-2}$, the large Lipschitz constant of $\lambda R(\vw)$ in the complexity bound of BAXG makes BAXG no longer faster than SAMP\footnote{As shown in Table \ref{tb:result}, SAMP reduces the effect of large Lipschitz constant  by acceleration.}, while still has an edge over the EG method.  

\begin{figure}
 \subfigure[$\lambda=10^{-6}$]{
\begin{minipage}[t]{0.3\linewidth}
\centering
\includegraphics[scale=0.33]{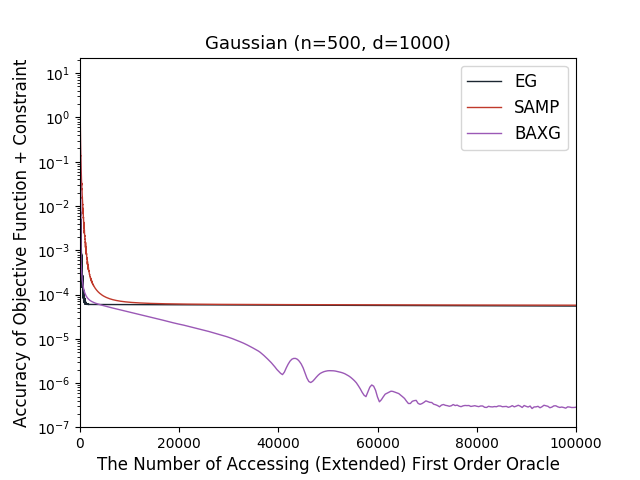}
\label{sub:fig1}
\end{minipage}%
}\hspace{2mm}
\subfigure[$\lambda=10^{-4}$]{
\begin{minipage}[t]{0.3\linewidth}
\centering
\includegraphics[scale=0.33]{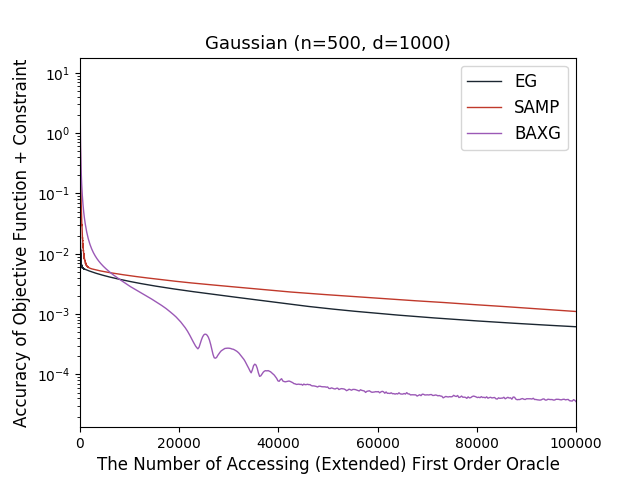}
\label{sub:fig2}
\end{minipage}%
}\hspace{2mm}
\subfigure[$\lambda=10^{-2}$]{
\begin{minipage}[t]{0.3\linewidth}
\centering
\includegraphics[scale=0.33]{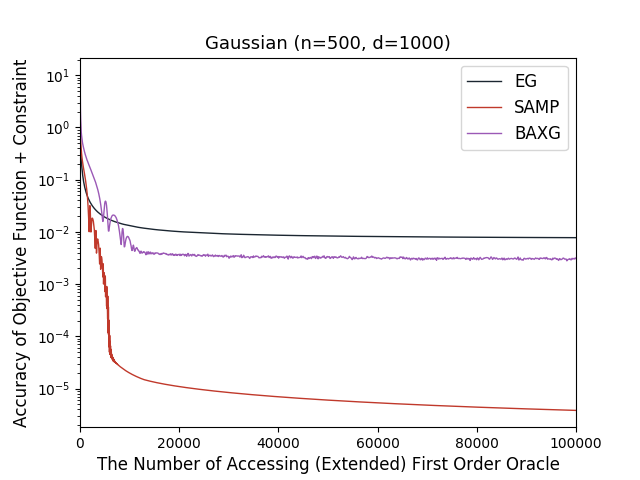}
\label{sub:fig3}
\end{minipage}
}%
\caption{Comparison of the EG, SAMP, and BAXG algorithms for linear equality constrained smoothed $\ell_1$-norm minimization.\vspace{-3mm}}\label{fig:1}
\end{figure}

\subsection{Dense Error Correction}\label{sec:add-exp}
In Section \ref{sub:smooth-l1}, we have conducted experiments on the first motivating example of linear equality constrained problem \eqref{eq:linear-smooth}  in Section \ref{eq:moti-exam}. In this section, we consider the second motivating example of dense error correction \eqref{eq:dens} and reformulate it as follows 
\begin{eqnarray}
\min_{\vw\in\bbR^d}\lambda R(\vw) + \|\mA^T\vw - \vb\|_1 = \min_{\vw\in\bbR^d}\max_{\vv:\|\vv\|_{\infty}\le 1}\lambda R(\vw) + \langle \vw, \mA\vv\rangle - \vb^T\vv, \label{eq:lad}
\end{eqnarray}
where $R(\vw)$ is the smoothed version \eqref{eq:R} of $\ell_1$ norm with $a=10^6.$ The problem \eqref{eq:lad}  is also called least absolute deviation (LAD) in regression literature.
Compared with \eqref{eq:LC}, the only difference of the minimax reformulation is that the dual variable $\vv$ is constrained in a unit $\ell_{\infty}$ ball. 

For the problem \eqref{eq:lad}, in the experiments, we initialize $\mA\in\bbR^{d\times n}$ with $d=1,000$ and $n=500$ as a random Gaussian matrix with \emph{i.i.d.} elements from $\mathcal{N}(0, 1/\sqrt{n}),$ $\vb$ is set as a random vector of values from $\{1, -1\}$ and $\lambda\in\{10^{-6}, 10^{-4}, 10^{-2}\}$. 
The best Lipschitz constant of $\lambda R(\vw)$  in $\{10^{-5}, 10^{-4}, 10^{-3}, 10^{-2},$ $10^{-1}, 1, 10, 10^2, 10^3\}$.

Again we use the number of accessing the (extended) first order oracle as the $x$-axis. Meanwhile we use the gap of objective function $(\lambda R(\vw) + \|\mA^T\vw - \vb\|_1) - (\lambda R(\vw^*) + \|\mA^T\vw^* - \vb\|_1)$ to measure the progress of the algorithms, where $\vw^*$ is found by running the BAXG method with enough iterations. 
In Figure \ref{fig:2}, we show the comparison of these algorithms for solving \eqref{eq:lad}. As we see, for both settings of $\lambda=10^{-6}$ and $\lambda=10^{-4}$, the proposed BAXG method will outperform the EG and BAXG methods, due to the accelerated rate $\tilde{O}(\frac{\sigma_{\max}}{\sigma_{\min}} \sqrt{L/{\epsilon}}\log \frac{1}{\epsilon})$. For the setting $\lambda = 10^{-2}$, shown in  Figure \ref{fig:2}(c), because of the large Lipschitz constant of $\lambda R(\vw)$, the constant in the complexity bound of BAXG will be very large such that in the low accurate region, the performance of BAXG will not be better than the EG and SAMP methods. Nevertheless, as shown in Figure \ref{fig:2}(c), the performance of BAXG will eventually approach that of the EG method. 

The experiments in the above and those given in Section \ref{sub:smooth-l1} on the two optimization problems all conform with the theoretical characterization of our algorithm, its accelerated convergence rate under the prescribed working conditions. 

\begin{figure}[t!]
 \centering
 \subfigure[$\lambda=10^{-6}$]{
\begin{minipage}[t]{0.3\linewidth}
\centering
\includegraphics[scale = 0.33]{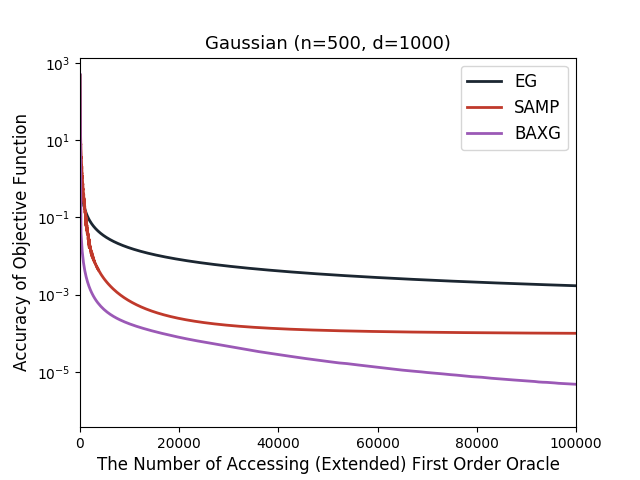}
\label{sub:fig4}
\end{minipage}%
}\hspace{2mm}
\subfigure[$\lambda=10^{-4}$]{
\begin{minipage}[t]{0.3\linewidth}
\centering
\includegraphics[scale = 0.33]{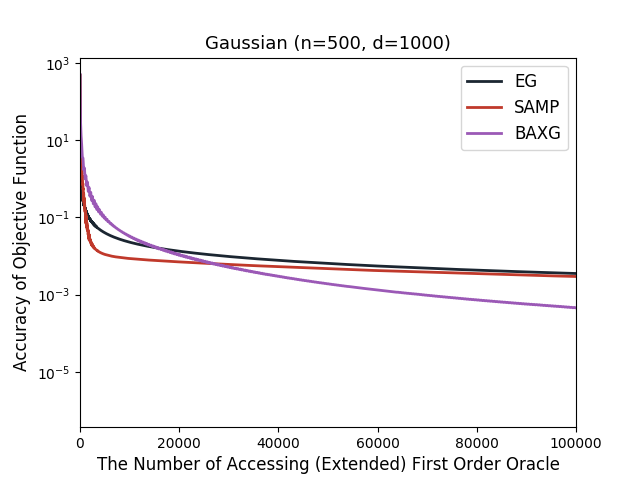}
\label{sub:fig5}
\end{minipage}%
}\hspace{2mm}
\subfigure[$\lambda=10^{-2}$]{
\begin{minipage}[t]{0.3\linewidth}
\centering
\includegraphics[scale = 0.33]{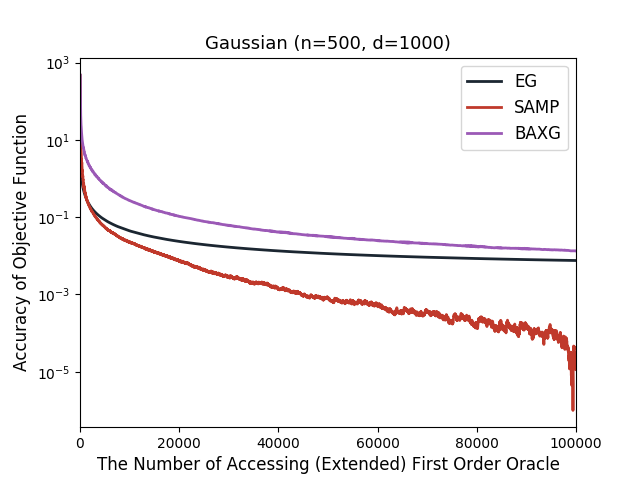}
\label{sub:fig6}
\end{minipage}
}%
\caption{Comparision of the EG, SAMP, and BAXG algorithms for dense error correction.}\label{fig:2}
\end{figure}

\section{Conclusion and Future Work}
In this paper, we have studied an important class of bilinear convex concave minimax optimization problems $\min_{\vw\in\in\bbR^d}\max_{\vv\in\gV}f(\vw) +\langle \vw, \mA\vv\rangle - h(\vv)$, where $\gV$ is a closed convex set, both $f(\vw)$ and $h(\vv)$ are convex and smooth, and $\mA$ has full column rank. By assuming that we can access the extended first order oracle in \eqref{eq:extend-oracle-2}, we propose a bilinear accelerated extragradient (BAXG) method to solve this class of problems. We have shown that, to find an $\epsilon$-accurate solution that satisfies \eqref{eq:appro-weak}, the BAXG method needs at most $O(1/\sqrt{\epsilon}\log \frac{1}{\epsilon})$ number of accessing the extended first order oracle in \eqref{eq:extend-oracle-2}, which substantially improves the previous lower bound complexity results $O(1/\epsilon)$ of methods that only allow to access the first order oracle in \eqref{eq:oracle-2}.
Meanwhile, in the future it is also of interest to extend the BAXG method into the stochastic setting, the finite sum setting, or the nonconvex-nonconcave setting, which are more pertinent to problems that arise in contemporary machine/statistical learning.

\bibliography{ref3}
\bibliographystyle{alpha}

\clearpage
\appendix
\onecolumn

\section{Proof for Section \ref{sec:axg}}

\subsection{Proof of Lemma \ref{lem:dis-upper}}
\label{subsec:lem:dis-upper}
\begin{proof}
By the optimality condition of $\vz_k$, we have $\forall \vy \in\gX,$
\begin{eqnarray}
\Big\langle \sum_{i=1}^k a_i(\nabla g(\vx_i)+\mQ\hat{\vz}_i)+ 2(\vz_k - \vx_0), \vy - \vz_k\Big\rangle \ge 0. 
\end{eqnarray}
Then 
\begin{eqnarray}
\psi_k(\vz_k)&=&\sum_{i=1}^ka_i( g(\vx_i) + \langle \nabla g(\vx_i), \vz_k - \vx_i\rangle + \langle \mQ\hat{\vz}_i, \vz_k-\vy\rangle ) +  \|\vz_k-\vx_0\|^2\nonumber\\ 
&\le&\sum_{i=1}^ka_i(g(\vx_i) + \langle \nabla g(\vx_i), \vz_k - \vx_i\rangle) + \Big\langle \sum_{i=1}^k a_i \nabla g(\vx_i) +2(\vz_k - \vx_0), \vy - \vz_k\Big\rangle +   \|\vz_k-\vx_0\|^2\nonumber\\
&\le&\sum_{i=1}^ka_i ( g(\vx_i) +  \langle \nabla g(\vx_i), \vy - \vx_i\rangle )+\langle 2(\vz_k - \vx_0), \vy - \vz_k\rangle +   \|\vz_k-\vx_0\|^2\nonumber\\
&\le& \sum_{i=1}^ka_i g(\vy) +   \|\vy-\vx_0\|^2\nonumber\\
&=& A_k g(\vy)  +  \|\vy-\vx_0\|^2.\nonumber
\end{eqnarray}

Lemma \ref{lem:dis-upper} is proved.
\end{proof}

\subsection{Proof of Lemma \ref{lem:dis-lower}}
\label{subsec:lem:dis-lower}
\begin{proof}
First, in \eqref{eq:dis}, by $A_0 =0$ and $\vz_0=\vx_0,$ we have
\begin{equation}
A_0 f(\vx_0) - \psi_0(\vz_0)=0.\label{eq:A-0-x-0}
\end{equation}
By our assumption, $\hat{f}(\vx; \vx_i,\hat{\vz}_i,\vy)$ is a linear function of $\vx$ and $\|\vx - \vx_0\|^2$ is $2$-strongly convex. Therefore for all $\vx, \vy\in \gX$, it follows that 
\begin{align}
\psi_{i}(\vx)\ge \psi_{i}(\vy)+\langle \nabla \psi_{i}(\vy), \vx-\vy\rangle  +  \|\vx-\vy\|^2.
\end{align}
By the optimality condition $\vz_i$, we have $\forall \vx\in\gX, \langle \nabla \psi_{i}(\vz_i), \vx-\vz_i\rangle\ge 0.$ So it follows that 
\begin{eqnarray}
\psi_{i}(\vx)\ge \psi_i(\vz_i)  + \|\vx-\vz_i\|^2.\label{eq:axg-dis-tmp1}
\end{eqnarray}
Therefore applying \eqref{eq:axg-dis-tmp1} to $\psi_{i-1}(\vx)$, we have
\begin{eqnarray}
\psi_{i}(\vx) &=& \psi_{i-1}(\vx)+ a_{i}\hat{f}(\vx; \vx_{i},\hat{\vz}_i, \vy)\nonumber\\
&\ge& \psi_{i-1}(\vz_{i-1}) + \|\vx-\vz_{i-1}\|^2 + a_{i}\hat{f}(\vx;\vx_{i},\hat{\vz}_i, \vy)\nonumber\\
&=& \psi_{i-1}(\vz_{i-1}) + \|\vx-\vz_{i-1}\|^2 + 
a_i(g(\vx_i) + \langle \nabla g(\vx_i), \vx - \vx_i\rangle+ \langle \mQ\hat{\vz}_i, \vx-\vy\rangle).
\label{eq:axg-dis-tmp2}
\end{eqnarray}
Meanwhile, we can give a lower bound of the last term of RHS of \eqref{eq:axg-dis-tmp2} as follows
\begin{eqnarray*}
&& a_{i}(g(\vx_i) + \langle \nabla g(\vx_i), \vx-\vx_{i}\rangle  ) \\
&=&A_{i}\left(g(\vx_{i})  + \left\langle \nabla g(\vx_{i}), \frac{a_{i}}{A_{i}}\vx+\frac{A_{i-1}}{A_{i}}\vx_{i-1}-\vx_{i}\right\rangle\right) \\
&&\quad-A_{i-1}(g(\vx_{i})  + \langle \nabla g(\vx_{i}), \vx_{i-1}-\vx_{i}\rangle) \\
&{\ge}&A_{i}\left(g(\vx_{i})  + \left\langle \nabla g(\vx_{i}), \frac{a_{i}}{A_{i}}\vx+\frac{A_{i-1}}{A_{i}}\vx_{i-1}-\vx_{i}\right\rangle\right)-A_{i-1}g(\vx_{i-1}) \\
&=&A_{i}g(\vx_{i})-A_{i-1} g(\vx_{i-1}) + A_{i}  \left\langle \nabla g(\vx_{i}), \frac{a_{i}}{A_{i}}\vx+\frac{A_{i-1}}{A_{i}}\vx_{i-1}-\vx_{i}\right\rangle,
\end{eqnarray*}
and
\begin{eqnarray}
a_i \langle \mQ\hat{\vz}_i, \vx - \vy\rangle =  a_i \langle \mQ\hat{\vz}_i, \vx - \hat{\vz}_i\rangle +  a_i \langle \mQ\hat{\vz}_i, \hat{\vz}_i - \vy\rangle. 
\end{eqnarray}

Therefore, it follows that
\begin{eqnarray}
\psi_{i}(\vx) &\ge& \psi_{i-1}(\vz_{i-1}) + \|\vx-\vz_{i-1}\|^2 +  A_{i}g(\vx_{i}) -A_{i-1} g(\vx_{i-1}) \nonumber\\
&&+ A_{i}  \left\langle \nabla g(\vx_{i}), \frac{a_{i}}{A_{i}}\vx+\frac{A_{i-1}}{A_{i}}\vx_{i-1}-\vx_{i}\right\rangle+a_i \langle \mQ\hat{\vz}_i, \vx - \hat{\vz}_i\rangle +  a_i \langle \mQ\hat{\vz}_i, \hat{\vz}_i - \vy\rangle
. \label{eq:axg-es-2-1}
\end{eqnarray}

By setting $\vx\doteq\vz_{i}$ and a simple arrangement of \eqref{eq:axg-es-2-1}, we have
\begin{eqnarray}
&&(A_{i}g(\vx_{i})- \psi_{i}(\vz_{i}))-(A_{i-1} g(\vx_{i-1})- \psi_{i-1}(\vz_{i-1}))+ a_i \langle \mQ\hat{\vz}_i, \hat{\vz}_i -\vy\rangle \nonumber\\
&\le& A_{i}\left\langle \nabla g(\vx_{i}), \vx_{i}-\frac{a_{i}}{A_{i}}\vz_{i}-\frac{A_{i-1}}{A_{i}}\vx_{i-1}\right\rangle -    \|\vz_{i}-\vz_{i-1}\|^2 +a_i \langle \mQ\hat{\vz}_i,  \hat{\vz}_i-\vz_i\rangle\nonumber\\
&=& a_{i}\left\langle\nabla g(\vx_{i}),  \hat{\vz}_i-\vz_i\right\rangle -    \|\vz_{i}-\vz_{i-1}\|^2 +a_i \langle \mQ\hat{\vz}_i,  \hat{\vz}_i-\vz_i\rangle\nonumber\\
&=&a_{i}\left\langle\nabla g(\vx_{i}) + \mQ\hat{\vz}_i ,  \hat{\vz}_i-\vz_i\right\rangle -    \|\vz_{i}-\vz_{i-1}\|^2
.\label{eq:axg-es-2-2}
\end{eqnarray}

By the monotone property of $\mQ(\vx)$ and the definition of $a_i$ and $\hat{\vz}_i$, it follows that
\begin{eqnarray}
&&\sum_{i=1}^ka_i\langle \mQ\hat{\vz}_i, \hat{\vz}_i -\vy\rangle  \ge \sum_{i=1}^ka_i\langle \mQ(\vy), \hat{\vz}_i -\vy\rangle
= \Big\langle \mQ\vy, \sum_{i=1}^k a_i(\hat{\vz}_i -\vy)\Big\rangle=A_k \langle \mQ\vy, \vx_k -\vy\rangle. \label{eq:es-2-22}
\end{eqnarray}

Summing \eqref{eq:axg-es-2-2} from $i=1$ to $k$ and by \eqref{eq:es-2-22} and \eqref{eq:A-0-x-0}, it follows that
\begin{eqnarray}
&&A_{k}(g(\vx_k)+\langle \mQ\vy, \vx_k-\vy\rangle)- \psi_{k}(\vz_{k})\nonumber\\
&\le& A_{k}g(\vx_k)- \psi_{k}(\vz_{k}) + \sum_{i=1}^k a_i\langle \mQ\hat{\vz}_i, \hat{\vz}_i -\vy\rangle \nonumber\\ 
&\le& A_{0}f(\vx_{0})- \psi_{0}(\vz_{0}) 
+ \sum_{i=1}^{k}\left(a_{i}\left\langle\nabla g(\vx_{i}) + \mQ\hat{\vz}_i,  \hat{\vz}_i-\vz_i\right\rangle -    \|\vz_{i}-\vz_{i-1}\|^2
\right)\nonumber\\
&=&\sum_{i=1}^{k}\left(a_{i}\left\langle \nabla g(\vx_{i}) + \mQ\hat{\vz}_i ,  \hat{\vz}_i-\vz_i\right\rangle -    \|\vz_{i}-\vz_{i-1}\|^2\right).\nonumber
\end{eqnarray}

Hence, by the definition of $E_i$, Lemma \ref{lem:dis-lower} is proved.

\end{proof}

\subsection{Proof of Lemma \ref{lem:E-3}}\label{sec:lem:E-3}

\begin{proof}
According to the Step 7 and 8 of Algorithm \ref{alg:axg}, in the $i$-th iteration, we have
\begin{eqnarray}
\vx_i &=& \hat{\vx}_{i-1} + \frac{a_i}{A_i}(\hat{\vz}_i - \vz_{i-1}). 
\end{eqnarray}

By the definition of $E_i$ in Lemma \ref{lem:dis-lower}, one has: $\forall i\in[k],$
\begin{eqnarray}
E_{i}&\le&a_{i}\left\langle \nabla g(\vx_{i}) + \mQ\hat{\vz}_i ,  \hat{\vz}_i-\vz_i\right\rangle - \|\vz_{i}-\vz_{i-1}\|^2
\nonumber\\
&\le&  a_{i}\left\langle \nabla g(\vx_{i}) + \mQ\hat{\vz}_i + \frac{2}{a_i}(\hat{\vz}_{i} - \vz_{i-1}),\hat{\vz}_{i}-\vz_{i}\right\rangle   -   \left(  \|\hat{\vz}_{i} - \vz_{i-1}\|^2 +  \|\hat{\vz}_{i} - {\vz}_{i}\|^2\right) \nonumber\\
&\le&a_{i}\langle \nabla g(\vx_{i}) - \nabla g(  \hat{\vx}_{i-1}), \hat{\vz}_{i} - \vz_{i}\rangle
 + a_{i}\left\langle  \nabla {g}(  \hat{\vx}_{i-1})+ \mQ\hat{\vz}_i + \frac{2}{a_i}(\hat{\vz}_{i} - \vz_{i-1}),  \hat{\vz}_{i} - \vz_{i}\right\rangle \nonumber\\
 &&-    \left(  \|\hat{\vz}_{i} - \vz_{i-1}\|^2 +  \|\hat{\vz}_{i} - {\vz}_{i}\|^2\right) \nonumber\\
&\le& a_i\|\nabla g(\vx_{i}) - \nabla g(  \hat{\vx}_{i-1})\| \|\hat{\vz}_{i} - \vz_{i}\| + a_{i}\left\langle  \nabla {g}(  \hat{\vx}_{i-1})+ \mQ\hat{\vz}_i + \frac{2}{a_i}(\hat{\vz}_{i} - \vz_{i-1}),  \hat{\vz}_{i} - \vz_{i}\right\rangle \nonumber\\
&&- \left(  \|\hat{\vz}_{i} - \vz_{i-1}\|^2 +  \|\hat{\vz}_{i} - {\vz}_{i}\|^2\right) \nonumber\\
 &\le&a_iL\|\vx_i -  \hat{\vx}_{i-1}\|\|\hat{\vz}_{i} - {\vz}_{i}\|+ a_{i}\left\langle  \nabla {g}(  \hat{\vx}_{i-1})+ \mQ\hat{\vz}_i + \frac{2}{a_i}(\hat{\vz}_{i} - \vz_{i-1}),  \hat{\vz}_{i} - \vz_{i}\right\rangle \nonumber\\
&&- \left(  \|\hat{\vz}_{i} - \vz_{i-1}\|^2 +  \|\hat{\vz}_{i} - {\vz}_{i}\|^2\right) \nonumber\\
&=&\frac{a_i^2L}{A_i}\|\hat{\vz}_i - \vz_{i-1}\|\|\hat{\vz}_i  - \vz_i\| + a_i\Big\langle \nabla g(\hat{\vx}_{i-1}) + \mQ \vz_{i-1} + (\mQ + \frac{2}{a_i}\mI)(\hat{\vz}_i - \vz_{i-1}), \hat{\vz}_i  - \vz_i\Big\rangle  \nonumber\\ 
&&- \left(  \|\hat{\vz}_i - \vz_{i-1}\|^2 +  \|\hat{\vz}_i  - \vz_i\|^2\right) \nonumber\\
&\le& \frac{1}{2}\Big(\Big(\frac{a_i^2L}{A_i}\Big)^2 -1\Big)\|\hat{\vz}_i - \vz_{i-1}\|^2  + a_i\Big\langle \nabla g(\hat{\vx}_{i-1}) + \mQ \vz_{i-1} + (\mQ + \frac{2}{a_i}\mI)(\hat{\vz}_i - \vz_{i-1}), \hat{\vz}_i  - \vz_i\Big\rangle  \nonumber\\ 
&&- \frac{1}{2}\left(  \|\hat{\vz}_i - \vz_{i-1}\|^2 +  \|\hat{\vz}_i  - \vz_i\|^2\right). \label{eq:lem:E-311}
\end{eqnarray}

So by the Step 6 of Algorithm \ref{alg:axg}, in the $i$-th iteration, we have $\forall \vz \in \gX,$ 
\begin{eqnarray}
&&\!\!\!\!\!\!\!\!\!\!\!\! a_i\Big\langle \nabla g(\hat{\vx}_{i-1}) + \mQ \vz_{i-1} + (\mQ + \frac{2}{a_i}\mI)(\hat{\vz}_i - \vz_{i-1}), \hat{\vz}_i  - \vz\Big\rangle  
- \frac{1}{2}\left(  \|\hat{\vz}_i - \vz_{i-1}\|^2 +  \|\hat{\vz}_i  - \vz\|^2\right)\le 0. \label{eq:lem:E-312}
\end{eqnarray}
Therefore, by combining \eqref{eq:lem:E-311} and \eqref{eq:lem:E-312}, we have 
\begin{eqnarray}
E_i\le \frac{1}{2}\Big(\Big(\frac{a_i^2L}{A_i}\Big)^2 -1\Big)\|\hat{\vz}_i - \vz_{i-1}\|^2,
\end{eqnarray}
and Lemma \ref{lem:E-3} is proved.

\end{proof}

\section{Proof for Section \ref{sec:subprob}}

\subsection{Proof of Lemma \ref{lem:refor}}\label{sec:lem:refor}
\begin{proof}
In  the Step 6 of the $k$-th iteration of Algorithm \ref{alg:axg}, by the definitions of  $\{c, \vw^0,\vv^0,\vw^1,\vv^1, \tilde{\vw}, \tilde{\vv}\}$  in \eqref{eq:sim-notation2} and \eqref{eq:sim-notation}, we have 
\begin{align}
 \nabla g(\hat{\vx}_{k-1}) + \mQ \vz_{k-1} + (\mQ + \frac{2}{a_k}\mI)(\hat{\vz}_k - \vz_{k-1}) = 
 \left[\begin{matrix}
      \vw^0 + \mA(\tilde{\vv} - \vv^1) + \frac{2}{c}(\tilde{\vw}-\vw^1)\\
      \vv^0 -\mA^T(\tilde{\vw} - \vw^1) + \frac{2}{c}(\tilde{\vv}-\vv^1)
 \end{matrix}\right].
\end{align}
So the condition \eqref{eq:cond} is equivalent to 
$\forall \vw\in\bbR^d, \vv\in\gX,$ 
\begin{align}
&{c}\Big\langle \vw^0 + \mA(\tilde{\vv} - \vv^1) + \frac{2}{c}(\tilde{\vw} - \vw^1),   \tilde{\vw} -\vw\Big\rangle + {c}\Big\langle \vv^0 - \mA^T(\tilde{\vw} - \vw^1) + \frac{2}{c}(\tilde{\vv} - \vv^1),   \tilde{\vv} - \vv\Big\rangle + \nonumber\\
&\quad\quad-\frac{1}{2}\left(\|\vw - \vw^1\|^2 + \|\vv-\vv^1\|^2 + \| \tilde{\vw} - \vw\|^2 + \|\tilde{\vv}-\vv\|^2\right)\le 0. \label{eq:cond-refor}
\end{align}

By setting 
\begin{align}
\vw^0 + \mA (\tilde{\vv} - \vv^1) + \frac{2}{c} (\tilde{\vw} - \vw^1) = \vzero \quad   \Longleftrightarrow   \quad \tilde{\vw} = \vw^1 - \frac{c}{2} (\vw^0 + \mA(\tilde{\vv}-\vv^1)), \label{eq:cond-refor-2}
\end{align}
a sufficient condition for \eqref{eq:cond-refor} is 
\begin{eqnarray}
{c}\Big\langle \vv^0 - \mA^T(\tilde{\vw} - \vw^1) + \frac{2}{c}(\tilde{\vv} - \vv^1),   \tilde{\vv} - \vv\Big\rangle -\frac{1}{2}\left(\|\vv-\vv^1\|^2 + \|\tilde{\vv}-\vv\|^2\right)\le 0.\label{eq:cond-refor-3}
\end{eqnarray}

By setting $\forall \vv\in\gV,$
\begin{align}
l(\vv) \doteq& \frac{1}{2}(\vv-\vv^1)^T\big(\mA^T\mA + \frac{4}{c^2}\mI\big)(\vv-\vv^1) + \big( \frac{2}{c}\vv^0 + \mA^T\vw^0\big)^T(\vv-\vv^1), \label{eq:cond-refor-4}       
\end{align}
and by \eqref{eq:cond-refor-2}, \eqref{eq:cond-refor-3} is equivalent to 
\begin{align}
&\big\langle \nabla l(\tilde{\vv}), \tilde{\vv}-  {\vv}\big\rangle -\frac{1}{c^2}\left( \|\tilde{\vv} - \vv^1\|^2  + \| \tilde{\vv} - \vv\|^2\right)\le 0, \label{eq:cond-refor-33}   
\end{align}
To guarantee \eqref{eq:cond-refor-33}, by the Cauchy-Schwarz inequality $2ab\le a^2 +b^2$, a sufficient condition is $\forall \vv\in\gV,$
\begin{align}
\big\langle \nabla l(\tilde{\vv}), \tilde{\vv}-  {\vv}\big\rangle \le \frac{{2}}{c^2}\|\tilde{\vv} - \vv^1\| \| \tilde{\vv} - \vv\|.     
\end{align}

Lemma \ref{lem:refor} is proved.

\end{proof}

\subsection{Proof of Lemma \ref{lem:varphi-upper}}\label{sec:lem:varphi-upper}

\begin{proof}
It follows that $\forall \vu \in \gV,$ 
\begin{eqnarray}
\varphi_t(\vu)&=&\sum_{i=1}^t b_i( l(\hat{\vv}_{i-1}) + \langle \nabla l(\hat{\vv}_{i-1}), \vu - \hat{\vv}_{i-1}\rangle + \frac{\sigma_l}{2}\|\vu - \hat{\vv}_{i-1}\|^2) +  \frac{1}{2}\|\vu-\vv_0\|^2\nonumber\\ 
&\le& \sum_{i=1}^t b_i l(\vu) +  \frac{1}{2} \|\vu-\vv_0\|^2\nonumber\\
&=& B_t l(\vu)  +  \frac{1}{2}\|\vu-\vv_0\|^2.\nonumber
\end{eqnarray}

So we have 
\begin{eqnarray}
\varphi_t(\vu_t) =  \min_{\vu\in\gV}\varphi_t(\vu)\le \varphi_t(\vv^*)\le  B_t l(\vv^*)+ \frac{1}{2}\|\vv^*-\vv_0\|^2.
\end{eqnarray}

Lemma \ref{lem:varphi-upper} is proved.
\end{proof}

\subsection{Proof of Lemma \ref{lem:varphi-lower}}\label{sec:lem:varphi-lower}
\begin{proof}
By our setting that $B_0=0, \vu_0 = \vv_0$ in Algorithm \ref{alg:pagd}, we have
\begin{equation}
B_0 l(\vv_0) - \varphi_0(\vu_0)=0.\label{eq:var-A-0-x-0}
\end{equation}
Meanwhile for all $\vu, \vv\in \gV$ and $1\le i\le k$, it follows that 
\begin{align}
\varphi_{i}(\vu)\ge \varphi_{i}(\vv)+\langle \nabla \varphi_{i}(\vv), \vu-\vv\rangle  + \frac{1+\sigma_l B_{i-1}}{2}  \|\vu-\vv\|^2.
\end{align}
Then in the $i$-th iteration, by the optimality condition of $\vu_i$, we have $\forall \vu\in\gV, \langle \nabla \varphi_i(\vu_i), \vu-\vu_i\rangle \ge 0$. So it follows that 
\begin{eqnarray}
\varphi_{i}(\vu)\ge \varphi_i(\vu_i)  + \frac{1+\sigma_l B_{i}}{2}  \|\vu-\vu_i\|^2.\label{eq:dis-tmp1}
\end{eqnarray}
Therefore by using \eqref{eq:dis-tmp1} on $\varphi_{i-1}(\vu)$, we have
\begin{eqnarray}
\varphi_{i}(\vu) &=& \varphi_{i-1}(\vu)+ b_i(l(\hat{\vv}_{i-1}) + \langle \nabla l(\hat{\vv}_{i-1}), \vu - \hat{\vv}_{i-1}\rangle + \frac{\sigma_l}{2}\|\vu - \hat{\vv}_{i-1}\|^2)\nonumber\\
&\ge& \varphi_{i-1}(\vu_{i-1}) +  \frac{1+\sigma_l B_{i-1}}{2}\|\vu-\vu_{i-1}\|^2 + b_i(l(\hat{\vv}_{i-1}) + \langle \nabla l(\hat{\vv}_{i-1}), \vu - \hat{\vv}_{i-1}\rangle + \frac{\sigma_l}{2}\|\vu - \hat{\vv}_{i-1}\|^2)\nonumber\\
&\ge& \varphi_{i-1}(\vu_{i-1}) +  \frac{1+\sigma_l B_{i-1}}{2} \|\vu-\vu_{i-1}\|^2 + 
b_i(l(\hat{\vv}_{i-1}) + \langle \nabla l(\hat{\vv}_{i-1}), \vu - \vu_{i-1}\rangle).
\label{eq:dis-tmp2}
\end{eqnarray}

Let $\tilde{\vv}_i \doteq \frac{B_{i-1}}{B_i}\vv_{i-1} + \frac{b_i}{B_i}\vu_i.$ Then by the setting $\hat{\vv}_{i-1} = \frac{B_{i-1}}{B_i}\vv_{i-1} + \frac{b_i}{B_i}\vu_{i-1},$  we have $\tilde{\vv}_i - \hat{\vv}_{i-1} = \frac{b_i}{B_i}(\vu_i - \vu_{i-1}).$

By the definition of the sequences $\{b_i\}$ and $\{B_i\}$, we have
$\frac{(1+\sigma_l B_{i-1})B_i}{b_i^2}\ge 2L$. In \eqref{eq:dis-tmp2}, let $\vu \doteq  \vu_i$, then we have  
\begin{eqnarray}
\varphi_i(\vu_i) - \varphi_{i-1}(\vu_{i-1})&\ge&   \frac{1+\sigma_l B_{i-1}}{2} \|\vu_i-\vu_{i-1}\|^2 + 
b_i(l(\hat{\vv}_{i-1}) + \langle \nabla l(\hat{\vv}_{i-1}), \vu_i - \hat{\vv}_{i-1}\rangle)\nonumber\\
&=&  \frac{(1+\sigma_l B_{i-1})B_i^2}{2b_i^2} \|\tilde{\vv}_i - \hat{\vv}_{i-1}\|^2 + 
b_i(l(\hat{\vv}_{i-1}) + \langle \nabla l(\hat{\vv}_{i-1}), \vu_i - \hat{\vv}_{i-1}\rangle)\nonumber\\
&=& \frac{(1+\sigma_l B_{i-1})B_i^2}{2b_i^2} \|\tilde{\vv}_i - \hat{\vv}_{i-1}\|^2 + B_i(l(\hat{\vv}_{i-1}) + \langle \nabla l(\hat{\vv}_{i-1}), \tilde{\vv}_i - \hat{\vv}_{i-1}\rangle) \nonumber\\
&&- B_{i-1}(l(\hat{\vv}_{i-1}) + \langle \nabla l(\hat{\vv}_{i-1}),  \vv_{i-1} - \hat{\vv}_{i-1} \rangle) \nonumber\\ 
&=&B_i\Big(l(\hat{\vv}_{i-1}) + \langle \nabla l(\hat{\vv}_{i-1}), \tilde{\vv}_i - \hat{\vv}_{i-1}\rangle + \frac{(1+\sigma_l B_{i-1})B_i}{2b_i^2} \|\tilde{\vv}_i - \hat{\vv}_{i-1}\|^2 \Big) \nonumber\\
&&- B_{i-1}(l(\hat{\vv}_{i-1}) + \langle \nabla l(\hat{\vv}_{i-1}),  \vv_{i-1} - \hat{\vv}_{i-1} \rangle) \nonumber\\ 
&\ge& B_i\Big(l(\hat{\vv}_{i-1}) + \langle \nabla l(\hat{\vv}_{i-1}), \tilde{\vv}_i - \hat{\vv}_{i-1}\rangle + L \|\tilde{\vv}_i - \hat{\vv}_{i-1}\|^2 \Big) \nonumber\\
&&- B_{i-1}(l(\hat{\vv}_{i-1}) + \langle \nabla l(\hat{\vv}_{i-1}),  \vv_{i-1} - \hat{\vv}_{i-1} \rangle) \nonumber\\ 
&\ge& B_i\Big(l(\hat{\vv}_{i-1}) + \langle \nabla l(\hat{\vv}_{i-1}), {\vv}_i - \hat{\vv}_{i-1}\rangle + L \|{\vv}_i - \hat{\vv}_{i-1}\|^2 \Big) \nonumber\\
&&- B_{i-1}(l(\hat{\vv}_{i-1}) + \langle \nabla l(\hat{\vv}_{i-1}),  \vv_{i-1} - \hat{\vv}_{i-1} \rangle) \nonumber\\ 
&\ge& B_i\Big(l(\vv_i) + \frac{L}{2} \|{\vv}_i - \hat{\vv}_{i-1}\|^2 \Big) - B_{i-1}(l(\hat{\vv}_{i-1}) + \langle \nabla l(\hat{\vv}_{i-1}),  \vv_{i-1} - \hat{\vv}_{i-1} \rangle) \nonumber\\
&\ge&  B_i\Big(l(\vv_i) + \frac{L}{2} \|{\vv}_i - \hat{\vv}_{i-1}\|^2 \Big) -  B_{i-1} l(\vv_{i-1})\nonumber\\ 
&\ge&  B_i l(\vv_i) -  B_{i-1} l(\vv_{i-1})  + \frac{LB_i}{2} \|{\vv}_i - \hat{\vv}_{i-1}\|^2. \label{eq:dis-tmp222}
\end{eqnarray}
Telescoping \eqref{eq:dis-tmp222} from $i=1$ to $t$, we have
\begin{align}
\varphi_t(\vu_t) - \varphi_0(\vu_0) \ge    B_t l(\vv_t) -  B_{0} l(\vv_{0})  +\sum_{i=1}^t  \frac{LB_i}{2}\|{\vv}_i - \hat{\vv}_{i-1}\|^2. \label{eq:dis-tmp2221} 
\end{align}

Therefore, combining \eqref{eq:var-A-0-x-0} and \eqref{eq:dis-tmp2221}, we have  \begin{align}
 B_t l(\vv_t) + \sum_{i=1}^t \frac{LB_i}{2}\|{\vv}_i - \hat{\vv}_{i-1}\|^2 \le \varphi_t(\vu_t),   
\end{align}
and Lemma \ref{lem:varphi-lower} is proved.

\end{proof}

\subsection{Proof of Lemma \ref{lem:strong-dist}}\label{sec:lem:strong-dist}
\begin{proof}
In the $i$-th iteration of Algorithm \ref{alg:pagd}, by the optimality condition of $\vv_i$, we have: $\forall \vv\in\gV,$
\begin{align}
 \Big\langle \nabla l(\hat{\vv}_{i-1})+ 2L_l(\vv_i - \hat{\vv}_{i-1}), \vv_i - \vv\Big\rangle \le 0.
\end{align}

So it follows that $\forall \vv\in\gV,$
\begin{eqnarray}
\langle \nabla l(\vv_i), \vv_i - \vv\rangle &=& \Big\langle \nabla l(\vv_i) - (\nabla l(\hat{\vv}_{i-1})+ 2L_l(\vv_i - \hat{\vv}_{i-1})), \vv_i - \vv\Big\rangle \nonumber\\
&&+   \Big\langle \nabla l(\hat{\vv}_{i-1})+ 2L_l(\vv_i - \hat{\vv}_{i-1}), \vv_i - \vv\Big\rangle\nonumber\\
&\le& \Big\langle \nabla l(\vv_i) - (\nabla l(\hat{\vv}_{i-1})+ 2L_l(\vv_i - \hat{\vv}_{i-1})), \vv_i - \vv\Big\rangle \nonumber\\
&\le& \|\nabla l(\vv_i) - \nabla l(\hat{\vv}_{i-1})\|\|\vv_i - \vv\| + 2L_l\|\vv_i - \hat{\vv}_{i-1} \|\|\vv_i - \vv\| \nonumber\\
&=&L_l\|\vv_i - \hat{\vv}_{i-1}\|\|\vv_i - \vv\| + 2L_l\|\vv_i - \hat{\vv}_{i-1} \|\|\vv_i - \vv\|\nonumber\\
&=& 3L_l\|\vv_i - \hat{\vv}_{i-1}\|\|\vv_i - \vv\|.\label{eq:sd-1}
\end{eqnarray}

Let $\vv\doteq\vv^*$. Then by the optimality condition of $\vv^*$ such that $\langle\nabla l(\vv^*), \vv_i-\vv^*\rangle\ge 0$ and the strong convexity of $l(\vv)$, we have 
\begin{eqnarray}
\langle\nabla l(\vv_i), \vv_i-\vv^*\rangle \ge \langle\nabla l(\vv^*), \vv_i-\vv^*\rangle + {\sigma_l}\|\vv_i-\vv^*\|^2\ge {\sigma_l}\|\vv_i-\vv^*\|^2. \label{eq:sd-2}
\end{eqnarray}
So by \eqref{eq:sd-1} and \eqref{eq:sd-2}, we have
\begin{eqnarray}
\|\vv_i-\vv^*\| \le \frac{3L_l}{\sigma_l}\|\vv_i - \hat{\vv}_{i-1}\|. 
\end{eqnarray}

Lemma \ref{lem:strong-dist} is proved.
\end{proof}

\subsection{Proof of Theorem \ref{thm:subprob}}\label{sec:thm:subprob}
\begin{proof}
By combining Lemmas \ref{lem:varphi-lower}, \ref{lem:varphi-upper}, we have 
\begin{eqnarray}
B_t l(\vv_t) + \sum_{i=1}^t \frac{L B_i}{2}\|\vv_i - \hat{\vv}_{i-1}\|^2 \le \varphi_t(\vu_t)
\le B_t l(\vv^*)+ \frac{1}{2}\|\vv^*-\vv_0\|^2.
\end{eqnarray}

By the fact $l(\vv_t) \ge l(\vv^*)$, we have
\begin{align}
\sum_{i=1}^t \frac{L_lB_i}{2}\min_{i\in[t]}\|\vv_i - \hat{\vv}_{i-1}\|^2\le \sum_{i=1}^t \frac{L_lB_i}{2}\|\vv_i - \hat{\vv}_{i-1}\|^2 \le \frac{1}{2}\|\vv^*-\vv_0\|^2.  
\end{align}
So in the $t$ iterations, let $\tilde{i} \doteq \arg \min_{i\in[t]}\|\vv_i - \hat{\vv}_{i-1}\|.$   
\begin{eqnarray}
\|\vv_{\tilde{i}} - \hat{\vv}_{\tilde{i}-1}\|= \min_{i\in[t]}\|\vv_i - \hat{\vv}_{i-1}\|\le {\frac{\|\vv^*-\vv_0\|}{\sqrt{L_l\Big(\sum_{i=1}^t {B_i}\Big)}}}.\label{eq:thm-11}
\end{eqnarray}
Combining \eqref{eq:thm-11} and Lemma \ref{lem:strong-dist}, we have 
\begin{eqnarray}
\|\vv^* - \vv_0\| - \|\vv_{\tilde{i}}-\vv_0\|  \le \|\vv_{\tilde{i}}-\vv^*\| \le \frac{3L_l}{\sigma_l}\|\vv_{\tilde{i}} - \hat{\vv}_{{\tilde{i}}-1}\|\le \frac{3\sqrt{L_l}}{\sigma_l} 
\frac{ \|\vv^*-\vv_0\|}{\sqrt{\sum_{i=1}^t {B_i}}},
\end{eqnarray}
and this gives
\begin{eqnarray}
\|\vv^*-\vv_0\| \le \frac{\|\vv_{\tilde{i}}-\vv_0\|}{1- \frac{3\sqrt{L_l}}{\sigma_l\sqrt{\sum_{i=1}^t {B_i}}}}. \label{eq:thm-12}
\end{eqnarray}
Then combining \eqref{eq:thm-11} and \eqref{eq:thm-12} and with the setting $\vv_0 = \vv^1$ where $\vv^1$ is defined in \eqref{eq:sim-notation}, we have
\begin{eqnarray}
\langle \nabla l(\vv_{\tilde{i}}), \vv_{\tilde{i}} - \vv\rangle  &\le&3L_l\|\vv_{\tilde{i}} - \hat{\vv}_{{\tilde{i}}-1}\|\|\vv_{\tilde{i}} - \vv\| \nonumber\\
&\le& 3L_l {\frac{\|\vv^*-\vv_0\|}{\sqrt{L_l\Big(\sum_{i=1}^t {B_i}\Big)}}}  \|\vv_{\tilde{i}} - \vv\|\\
&\le& { \frac{3\sqrt{L_l}}{\sqrt{\sum_{i=1}^t {B_i}}}} 
\cdot \frac{\|\vv_{\tilde{i}}-\vv_0\|\|\vv_{\tilde{i}} - \vv\|}{1-\frac{3\sqrt{L_l}}{\sigma_l\sqrt{\sum_{i=1}^t {B_i}}}}\nonumber\\
&=& { \frac{3\sqrt{L_l}}{\sqrt{\sum_{i=1}^t {B_i}}}} 
\cdot \frac{\|\vv_{\tilde{i}}-\vv^0\|\|\vv_{\tilde{i}} - \vv\|}{1-\frac{3\sqrt{L_l}}{\sigma_l\sqrt{\sum_{i=1}^t {B_i}}}}
.\label{eq:thm-13}
\end{eqnarray}
By \eqref{eq:thm-13}, to have $\langle \nabla l(\vv_i), \vv_i - \vv\rangle \le \frac{1}{c^2}\|\vv_i-\vv^0\|\|\vv_i - \vv\|$, it is easy to check that a sufficient condition is 
\begin{eqnarray}
\sum_{i=1}^t B_i \ge  \Big( \frac{3\sqrt{L_l}(\sigma_l c^2 + 1)}{\sigma_l}\Big)^2.\label{eq:thm-14}
\end{eqnarray}

Meanwhile, with our setting of $\{b_i\}, \{B_i\}$ in Algorithm \ref{alg:pagd}, we have
\begin{eqnarray}
\sum_{i=1}^t B_i &=&\frac{1}{\sqrt{2L_l\sigma_l}}\left(\left(1+\sqrt{\frac{\sigma_l}{2L_l}}\right)^t-1\right). \label{eq:thm-15}
\end{eqnarray}
So for the sufficient condition \eqref{eq:thm-15} to be true, by \eqref{eq:thm-15} and the fact $\sigma_l =\sigma_{\min}^2 + \frac{1}{c^2} , L_l=\sigma_{\max}^2 + \frac{1}{c^2}$ with $c = a_k = \frac{2k-1}{L}$, we only need 
\begin{eqnarray}
t\ge \frac{\log\Big(1+9\sqrt{2}L_l^{\frac{3}{2}}\sigma_l^{-\frac{3}{2}}(\sigma_l c^2 + 1)^2  \Big)}{\log\Big(1+\sqrt{\frac{\sigma_l}{{2}L_l}}\Big)} 
= \tilde{O}\Big(\frac{\sigma_{\max}}{\sigma_{\min}}\log k \Big),
\end{eqnarray}
iterations, where $\tilde{O}$ hides the logarithmic factors about $\sigma_{\min},\sigma_{\max}$ and $L.$ 

Theorem \ref{thm:subprob} is proved.
\end{proof}

\end{document}